\documentclass{lmcs}
\pdfoutput=1

% LMCS Layouting Macros
\usepackage{lastpage}
\lmcsdoi{21}{4}{1}
\lmcsheading{}{\pageref{LastPage}}{}{}%
{Nov.~12,~2024}{Oct.~03,~2025}{}

\usepackage{amsmath}
\usepackage{amsthm}
\usepackage{xcolor}

\newtheorem{proposition}[thm]{Proposition}
\newtheorem{lemma}[thm]{Lemma}
\newtheorem{corollary}[thm]{Corollary}

\newtheorem{alphatheorem}{Theorem}

\theoremstyle{definition}

\newtheorem{conjecture}[thm]{Conjecture}
\newtheorem{question}[thm]{Question}

\usepackage{amssymb}
\usepackage{enumitem}  
\usepackage{stmaryrd}
\usepackage{hyperref}
\usepackage[mathscr]{euscript}
\usepackage{lineno}
\usepackage[utf8]{inputenc}

\setenumerate{label={\normalfont(\textit{\roman*})}, ref={(\textrm{\roman*})},wide}

\newcommand{\fp}[1]{\left< #1 \right> }
\newcommand{\fpnormal}[1]{\langle  #1 \rangle }

\newcommand{\ip}[1]{\left\lfloor #1 \right\rfloor }
\newcommand{\ipnormal}[1]{\lfloor #1 \rfloor }
 
\newcommand{\nint}[1]{\left\lfloor #1 \right\rceil}

\newcommand{\fpa}[1]{\left\lVert #1 \right\rVert_{\RR/\ZZ}}

\newcommand{\floor}[1]{\left\lfloor #1 \right\rfloor}
\newcommand{\bra}[1]{\left( #1 \right)}
\newcommand{\brabig}[1]{\big( #1 \big)}
\newcommand{\branormal}[1]{( #1 )}

\newcommand{\abs}[1]{\left|#1\right|}
\newcommand{\set}[2]{\left\{ #1 \ \middle| \ #2 \right\} }
 
\newcommand{\ceil}[1]{\left\lceil #1 \right\rceil}

\newcommand{\e}{\varepsilon}
\renewcommand{\a}{\alpha}
\renewcommand{\b}{\beta}
\newcommand{\cgamma}{\gamma} % re-defining \c breaks the bibliography AUTHOR = {Cherlin, Gregory and Point, Fran\c{c}oise},

\newcommand{\NN}{\mathbb{N}}
\newcommand{\NNz}{\mathbb{N}}
\newcommand{\QQ}{\mathbb{Q}}
\newcommand{\ZZ}{\mathbb{Z}}
\newcommand{\RR}{\mathbb{R}}

\DeclareMathAlphabet{\mathpzc}{OT1}{pzc}{m}{it}

\newcommand{\cQ}{\mathcal{Q}}

\definecolor{fresh}{HTML}{000000}\definecolor{checked}{HTML}{000000}
\definecolor{external}{HTML}{000000}\definecolor{later}{HTML}{000000}
\definecolor{minor-rev}{HTML}{000000}\definecolor{major-rev}{HTML}{000000}\definecolor{skip}{HTML}{000000}\definecolor{normal}{HTML}{000000}

\renewcommand{\subset}{\subseteq}

\begin{document}

\lmcsorcid{0000-0003-4119-8570}
 \author[J.\ Konieczny]{Jakub Konieczny}
\address{Department of Computer Science, University of Oxford,
Wolfson Building, Parks Road, Oxford OX1 3QD, UK}
\email{jakub.konieczny@gmail.com}
 
\newcommand{\DeltaS}{\operatorname{\mathrlap{\ \circ}\operatorname{\Delta}}}
 \newcommand{\DeltaT}{\operatorname{\mathrlap{\ \phantom{\circ}}\operatorname{\Delta}}}
 
\title[Extensions of Presburger arithmetic by generalised polynomials]{Decidability of extensions of Presburger arithmetic by generalised polynomials}

\begin{abstract}
	We show that the extension of Presburger arithmetic by a quadratic generalised polynomial of a specific form is undecidable. 
\end{abstract}

\keywords{Presburger arithmetic, generalised polynomial, decidability}
\subjclass[2020]{Primary: 11U05. Secondary: 03B10, 03B25, 11J54.}

\maketitle

\newcommand{\Th}{\mathrm{Th}}

\section{Introduction}
\label{sec:intro}

\subsection{Background}
Presburger arithmetic, that is, the first-order theory of natural numbers with addition $\Th(\NN;+)$, is well-known to be decidable. This is in contrast with Peano arithmetic, which also includes multiplication and is well-known to be undecidable. It is not hard to see that if we extend Presburger arithmetic by adding the square function, then we obtain Peano arithmetic; indeed, it is enough to notice that $a \times b = c$ if and only if $(a + b)^2 - a^2 - b^2 = 2c$ (for a survey of relations between different extensions of Presburger arithmetic, see e.g.\ \cite{Korec-2001}). On the other hand, the extension of Presburger arithmetic by an exponential function is decidable \cite{Semenov-1983,CherlinPoint-1986}. We are thus led to ask: Which functions can we add to Presburger arithmetic and still obtain a decidable theory? For the sake of the discussion below, we point out that the first-order theory of integers with addition and order $\Th(\ZZ;<,+)$ is definitionally equivalent with Presburger arithmetic; we will consider extensions of either $\Th(\NN;+)$ or $\Th(\ZZ;<,+)$ depending on which is more natural in the given context.

Results concerning decidability of extensions of Presburger arithmetic connected with the multiplicative structure are surveyed in \cite{Bes-2001}. A wide class of decidable extensions of Presburger arithmetic was investigated by Semenov \cite{Semenov-1979,Semenov-1983}. He showed that $\Th(\NN;+,f)$ is decidable for each $f \colon \NN \to \NN$ which fulfils the condition of \emph{effective compatibility with addition}, which asserts, roughly speaking, that $f$ increases rapidly enough and is periodic modulo any integer. An alternative proof of this result was obtained by Cherlin and Point \cite{CherlinPoint-1986}. In particular, letting $E_k$ denote the exponential function $E_k(n) = k^n$, this result implies that $\Th(\NN;+,E_k)$ is decidable. Similarly, letting $F$ denote the factorial $F(n) = n!$, we have that $\Th(\NN;+,F)$ is decidable.

A \emph{$k$-automatic sequence} is a sequence $f \colon \NNz \to \Sigma$, taking values in a finite alphabet $\Sigma$, such that $f(n)$ can be computed by a finite automaton given the base-$k$ expansion of $n$ as input. For more background on automatic sequences, see \cite{AlloucheShallit-book}. It was shown by B\"uchi \cite{Buchi-1962} (with later corrections, for further discussion see e.g.\ \cite{BHMV-1992,BHMV-1994}) that for each $k \geq 2$ and each $k$-automatic sequence $f \colon \NN \to \Sigma$ the first-order theory $\Th(\NN;+,f)$ is decidable\footnote{Formally, we may identify $f$ with $\abs{\Sigma}$ unary predicates corresponding to the level sets $f^{-1}(x)$ for $x \in \Sigma$.}. In fact, it is known that $\Th(\NN;+,V_k)$ is decidable and $k$-automatic sequences are precisely the sequences whose level sets are definable in $(\NN;+,V_k)$. Here, $V_k \colon \NN \to \NN$ denotes the map which takes $k^i n$ to $k^i$, where $i \in \NNz$, $n \in \NN$ and $k \nmid n$. For further discussion of extensions of Presburger arithmetic related to digital expansions, we refer to the survey paper \cite{BHMV-1992} (cf.\ \cite{BHMV-1994}) and to more recent publications such as \cite{HieronymiSchulz-2020} and \cite{Schulz-2023}. More exotic numeration systems are also studied for instance in \cite
{MousaviSchaefferShallit-2016,
DuMousaviSchaefferShallit-2016,
DuMousaviRowlandSchaefferShallit-2017} and \cite{BaranwalShallit-2019}.

A \emph{Sturmian sequence} is a sequence $s_{\a,\rho} \colon \NN \to \{0,1\}$ given by
\begin{align}\label{eq:intro:def-sturm}
	s_{\a,\rho}(n) &= \ip{\a(n+1) + \rho} - \ip{\a n + \rho},  & n \in \NN,
\end{align}
 where $\a \in (0,1) \setminus \QQ$ and $\rho \in [0,1)$. The first-order theory $\Th(\NN;+,s_{\a,0})$ is decidable if $\a$ is a quadratic irrational and undecidable if the continued fraction expansion is not computable \cite{HieronymiTerry-2018}. A more comprehensive treatment of Sturmian sequences was carried out in \cite{HMOSSS-2022}. The authors prove, among other things, that the first-order theory 
\( \Th( \set{(\NN;+,s_{\a,\rho})}{\a \in (0,1) \setminus \QQ,\ \rho \in [0,1)}) \)
is decidable\footnote{This first-order theory consists, by definition, of all first-order sentences in the language of $(\NN;+,s)$ that are true, for all $\a \in (0,1) \setminus \QQ$ and $\rho \in [0,1)$, when $s$ is interpreted as $s_{\a,\rho}$; see \cite{HMOSSS-2022} for details.}.
 
As already alluded to earlier, for each polynomial sequence $f \colon \ZZ \to \ZZ$ of degree at least $2$, multiplication is definable in the language of $(\ZZ;+,f)$. It follows that $\Th(\ZZ;+,f)$ is undecidable, and so is $\Th(\ZZ;<,+,f)$.

\subsection{New results}
In this paper, we are interested in generalised polynomials, that is, expressions built up from the usual real polynomials with the use of the integer part function $\ip{x}$, addition and multiplication. For instance, the map $f \colon \ZZ \to \ZZ$ given by $f(n) = \ip{\sqrt{2} n^2 \ip{\sqrt{3}n}} + \ip{\pi n} \ip{ n^3/10}$ is a generalised polynomial. Generalised polynomials can also be thought of as a far-reaching generalisation of the notion of a Sturmian sequence. These sequences have been extensively studied, see e.g.\ \cite{BergelsonLeibman-2007,Leibman-2012,AdamczewskiKonieczny-2023-TAMS} and references therein.

In analogy with polynomials, we expect that the extension of Presburger arithmetic by a given generalised polynomial with at least quadratic rate of growth should be undecidable. 

\begin{conjecture}\label{conj:main}
	Let $g \colon \ZZ \to \ZZ$ be a generalised polynomial such that 
\begin{align}\label{eq:conj:g>>n^2}
\liminf_{n \to \infty} \abs{g(n)}/n^2 > 0.
\end{align} Then the first-order theory of $(\ZZ;<,+,g)$ is undecidable.
\end{conjecture}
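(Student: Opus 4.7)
The strategy is to show that multiplication on $\NN$ (or on an arithmetic progression of arbitrarily large elements) is first-order definable in $(\ZZ;+,>,g)$, which by the standard interpretation of Peano arithmetic forces the theory to be undecidable. The overall plan mirrors the case of ordinary polynomials, but with the complication that floor functions introduce $O(1)$ errors that must be absorbed by suitable quantifier arrangements rather than simply solved away.

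First I would invoke the structure theory of generalised polynomials due to Bergelson--Leibman and Leibman to bring $g$ into a canonical form, built from nested expressions of the shape $\ip{\alpha \cdot P(n,\ip{\beta_1 n},\dots)}$ with $\alpha,\beta_i \in \RR$ and $P$ a polynomial. The hypothesis \eqref{eq:conj:g>>n^2} would then force at least one genuinely quadratic contribution whose leading coefficient is irrational. Iterating a forward difference operator $\Delta_h f(n) := f(n+h) - f(n)$ enough times to kill the lower-degree parts, I would construct a definable function $q \colon \ZZ^2 \to \ZZ$ satisfying
\[
q(n,m) = \alpha \cdot nm + O(1)
\]
for some fixed irrational $\alpha$. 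In the prototypical case $g(n) = \ip{\alpha n^{2}}$ the choice $q(n,m) := g(n+m) - g(n-m)$ already gives $4 \alpha \cdot nm + O(1)$, which illustrates the intended shape of the argument.

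Given such a kernel $q$, I would define the graph of multiplication by a scaling trick: since $q(Na, Nb) = \alpha N^{2} ab + O(1)$ and $q(N, Nc) = \alpha N^{2} c + O(1)$ for every $N$, we have $ab = c$ if and only if $q(Na, Nb) - q(N, Nc)$ is bounded uniformly in $N \in \NN$. Uniform boundedness in $N$ is first-order expressible (there exists a bound $C$ such that the difference lies in $[-C, C]$ for all $N$), and the unknown scalar $\alpha$ drops out, so it never needs to be named in the formula. Defining the graph of multiplication on $\NN^{2}$ (even only on sufficiently large arguments) is enough to interpret $\Th(\NN;+,\cdot)$ and hence to conclude undecidability.

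The main obstacle is achieving the uniform $O(1)$ error bound across the full class of generalised polynomials. Nested brackets can interact with floor functions in subtle ways, so a naive differencing strategy may leave behind lower-complexity generalised-polynomial remainder terms that are bounded only in a weaker, averaged sense; dealing with these seems to require the full nilmanifold/nilsequence model of $g$ together with an induction on bracket complexity. Two degenerate configurations will also need separate treatment: the case in which the would-be leading coefficient $\alpha$ is rational, so the quadratic piece of $g$ is eventually periodic rather than equidistributed, and the case in which $g$ satisfies hidden algebraic relations that force the differences $q(Na,Nb) - q(N,Nc)$ to remain bounded for spurious reasons. For these reasons it seems prudent to first establish the conjecture for a concrete quadratic $g$ where the structural analysis is fully transparent, and then attempt to bootstrap to the general case.
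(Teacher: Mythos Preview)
This statement is presented in the paper as a \emph{conjecture}, not a theorem; the paper does not prove it in general and explicitly says a fully general result remains elusive. What the paper actually proves is the special case $g(n)=\nint{\beta n\nint{\alpha n}}$ (Theorem~A). So your proposal should be read as a strategy for an open problem, and the relevant comparison is with the paper's proof of that special case.

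There is a concrete gap in your plan, and it is exactly the one that forces the paper to take a different route. Your key step is to obtain a definable kernel $q(n,m)=\alpha nm+O(1)$ by differencing. For a single bracket $g(n)=\ip{\alpha n^2}$ this works, but already for the paper's example $g(n)=\nint{\beta n\nint{\alpha n}}$ it fails: the inner bracket contributes an error $\fp{\alpha n}$ which is then multiplied by $\beta m$ (and vice versa), so second differences carry terms of size $O(n)$, not $O(1)$. In general, each layer of nesting multiplies the inner rounding error by a polynomial in the outer variables, so your uniform $O(1)$ bound is simply false for the class of generalised polynomials covered by the conjecture. The ``leading coefficient is irrational'' claim is also unjustified (e.g.\ $g(n)=n^2+\ip{\sqrt 2\,n}$ satisfies the hypothesis with rational leading coefficient), though that case is comparatively harmless.

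The paper's approach for $g(n)=\nint{\beta n\nint{\alpha n}}$ is quite different in spirit from yours. Rather than bounding $\DeltaS^2 g$, the paper characterises \emph{exactly} when $\DeltaS^2 g(n_0,n_1,n_2)=0$ in terms of Diophantine conditions on the fractional parts $\fp{\alpha n_i}$ and $\fp{\beta n_i\nint{\alpha n_j}}$ (Lemma~3.1). Equidistribution (using the assumption that $1,\alpha,\alpha^2$ are $\QQ$-independent) then shows that these conditions are satisfied often enough to define, first, ``$\fpa{\alpha m}$ is small'' and, from that, long arithmetic progressions $P_{m,h}$ on which $g$ restricts to an honest quadratic polynomial. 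Multiplication is only recovered \emph{relative to a varying base $m$}: one defines a set $\cQ\subset\ZZ^4$ satisfying (i) every $(m,a,b,c)\in\cQ$ has the form $(m,km,lm,klm)$ and (ii) for every finite $F\subset\ZZ^2$ there is $m$ with $(m,km,lm,klm)\in\cQ$ for all $(k,l)\in F$. An abstract result (Proposition~2.1) says that Presburger plus any such $\cQ$ is already undecidable. This ``poor man's multiplication'' sidesteps the need for a uniform $O(1)$ kernel and is the main new idea you are missing.
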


As a first step in this direction, we verify this expectation for a specific generalised polynomial with quadratic growth. We believe that similar techniques should work with many other explicit examples, but a fully general result remains elusive. Below, for technical reasons, it will be more convenient to work with the nearest integer operation $\nint{\cdot}$, given by $\nint{x} = \ip{x+1/2}$, rather than with $\ip{\cdot}$. 

\begin{alphatheorem}\label{thm:main}
	Let $\a,\b \in \RR$, $\b \neq 0$, be such that $1,\a,\a^2$ are linearly independent over $\QQ$ and let $g \colon \ZZ \to \ZZ$ be the generalised polynomial given by
	\begin{align}\label{eq:def-g=n[an]}
		&& g(n) &= \nint{\b n \nint{\a n}} & (n \in \ZZ).
	\end{align}
	Then the first-order theory of $(\ZZ;<,+,g)$ is undecidable.
\end{alphatheorem}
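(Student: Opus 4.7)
The strategy is to interpret multiplication of integers in $(\ZZ;+,>,g)$; since $\Th(\ZZ;+,>,\cdot)$ is undecidable, this will yield the claim. The starting observation is the decomposition
\begin{equation*}
 g(n) = \a\b n^2 + \b n \delta(n) + \epsilon(n), \qquad \delta(n) = \nint{\a n} - \a n,\ \epsilon(n) \in [-\tfrac12,\tfrac12),
\end{equation*}
which shows that $g$ packages together a genuinely quadratic term $\a\b n^2$ and a ``Sturmian'' linear-size contribution $\b n \delta(n)$; the task is to separate these components via first-order formulas over $(\ZZ;+,>,g)$. Note that the assumption $\b \neq 0$ is essential since otherwise $g \equiv 0$.

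First, I would extract a Sturmian-like binary sequence from $g$. The first difference $g(n+1)-g(n)$ is, up to a bounded error, equal to $\b\nint{\a n} + \b n(\nint{\a(n+1)} - \nint{\a n})$, and the increment $\nint{\a(n+1)} - \nint{\a n}$ takes only two values (differing by $1$) thanks to irrationality of $\a$. Using only order and addition, I would isolate the indicator of the ``larger'' increment as a definable sequence $s \colon \ZZ \to \{0,1\}$ agreeing, outside a small set, with a shifted Sturmian sequence $s_{\a,\rho}$. Next, I would recover the Beatty sequence $n \mapsto \nint{\a n}$ as a definable function by using $g$ itself as an ``oracle'' for the cumulative sum: the product $\b n \nint{\a n}$ hidden inside $g(n)$ records running-sum information about $s$ that is otherwise inaccessible to Presburger-style formulas. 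Substituting the recovered $\nint{\a n}$ back into the defining identity for $g$ then yields the sequence $q(n) := \nint{\a\b n^2}$ as a definable function of $n$.

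Finally, from $q$ I would extract the squaring function $n \mapsto n^2$. The hypothesis that $1, \a, \a^2$ are $\QQ$-linearly independent is decisive here: it provides the Diophantine control needed for the values $q(n)$ to distinguish distinct squares in a first-order definable way, so that $n^2$ can be pinned down as the unique integer compatible with $q$ within a definable range. Multiplication is then defined via $2ab = (a+b)^2 - a^2 - b^2$, completing the interpretation. I expect the principal technical obstacle to be the extraction of $\nint{\a n}$ from the Sturmian indicator $s$ (running sums are not directly first-order definable in Presburger arithmetic, so one must route the construction through $g$), together with carefully matching the bounded errors coming from the two nested nearest-integer roundings against the joint equidistribution properties of $(\{\a n\},\{\b n\},\{\a\b n^2\})$ supplied by the linear-independence hypothesis.
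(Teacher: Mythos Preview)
Your high-level strategy---interpret multiplication in $(\ZZ;+,>,g)$---is the same as the paper's, but the route you sketch has a genuine gap, and the paper proceeds quite differently.

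You propose to recover $n \mapsto \nint{\a n}$ as a definable function by ``using $g$ as an oracle for the cumulative sum,'' but you supply no mechanism, and the obvious attempts fail: even after reducing to $\b \in \ZZ$ (as the paper does), so that $g(n) = \b n\,\nint{\a n}$ exactly, extracting $\nint{\a n}$ means dividing by the \emph{variable} $\b n$, which is not first-order over $(\ZZ;+,>)$. The subsequent steps inherit the same difficulty: passing from $g(n)$ and $\nint{\a n}$ to $\nint{\a\b n^2}$ still requires forming the variable product $\b n\,\fp{\a n}$, and recovering $n^2$ from $\nint{\a\b n^2}$ asks to divide by the constant $\a\b$, about which the hypothesis on $1,\a,\a^2$ says nothing. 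You correctly flag the extraction of $\nint{\a n}$ as the principal obstacle; the point is that it is not merely a bookkeeping issue with bounded rounding errors---it is a genuine expressivity barrier, and your sketch does not cross it.

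The paper circumvents this by never defining $n\mapsto n^2$ globally. It first shows (Section~\ref{sec:Mult}) that undecidability already follows from any definable $\cQ \subset \ZZ^4$ with $(m,a,b,c)\in\cQ \Rightarrow mc=ab$ and such that every finite set of pairs $(k,l)$ is realised as $(m,km,lm,klm)\in\cQ$ for some $m$. To build $\cQ$ it characterises the vanishing of the symmetric second difference $\DeltaS^2 g$ (Lemma~\ref{lem:D^2=0}), from which it extracts a first-order way to say ``$\fpa{\a m}$ is small'' (Lemma~\ref{lem:char-small-fpa}) and then a divisibility-type relation (Lemma~\ref{lem:char-div}); the linear independence of $1,\a,\a^2$ enters \emph{here}, via an equidistribution result for $(\fp{\a n},\fp{\a\nint{\theta n}})$ (Lemma~\ref{lem:distr}), not at your final extraction step. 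This yields definable arithmetic progressions $P_{m,h}$ on which $g$ restricts to a genuine quadratic, and there $\DeltaS g(m,c)=\DeltaS g(a,b)$ encodes $mc=ab$. The key idea you are missing is that restricting to a progression with step $m$ replaces the forbidden ``divide by a variable'' with a parametrised family of products, which the weak-multiplication framework of Section~\ref{sec:Mult} is designed to exploit.
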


{
In fact, in the situation of Theorem \ref{thm:main}, the relation $>$ is definable in $(\ZZ;<,+,g)$. The main ingredient needed here is a result essentially due to Vicky Neale \cite{Neale-thesis} asserting that there exists $s \geq 1$ such that each sufficiently large integer $m$ can be represented as $m = n_1 \nint{\a n_1} + n_2 \nint{\a n_2} + \dots + n_s \nint{\a n_s}$. Since $\abs{ g(n) - \b n \nint{\a n} } \leq 1/2$, it follows that each sufficiently large integer $m$ can be represented as $m = g(n_1) + g(n_2) + \dots + g(n_s) + r$ where $r$ is bounded (in fact, we can roughly estimate $0 \leq r < r_0 := \abs{\beta} + s$). Since $g(n) \geq 0$ for all $n \in \ZZ$, we see for all but finitely many $m \in \ZZ$ we have $m > 0$ if and only if there exist $n_1,n_2,\dots,n_s \in \ZZ$ such that for some $0 \leq r < r_0$ we have $m = g(n_1) + g(n_2) + \dots + g(n_s) + r$. Thus, Theorem \ref{thm:main} has the following immediate corollary.
}
\begin{corollary}\label{cor:main}
	Under the same assumption as in Theorem \ref{thm:main}, the first-order theory of $(\ZZ;+,1,g)$ is undecidable.
\end{corollary}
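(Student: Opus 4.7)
My plan is to reduce Corollary \ref{cor:main} to Theorem \ref{thm:main} by showing that $>$ is first-order definable in $(\ZZ;+,1,g)$. Once this is established, any sentence in the language of $(\ZZ;+,>,g)$ can be translated into a sentence of $(\ZZ;+,1,g)$, and since the constant $1$ is in turn definable in $(\ZZ;+,>)$ as the minimum positive element, a decision procedure for $\Th(\ZZ;+,1,g)$ would yield one for $\Th(\ZZ;+,>,g)$, contradicting Theorem \ref{thm:main}.

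The first step is to verify the asymptotic claim stated in the paragraph preceding the corollary. Assume $\a\b>0$ (the case $\a\b<0$ is analogous). Writing $\delta_n := \nint{\a n} - \a n \in (-\tfrac12,\tfrac12]$ and noting that $\delta_{n+1}-\delta_n \in \{\nint{\a}-\a,\,\nint{\a}-\a\pm 1\}$, a direct expansion gives
\[
\Delta g(n) := g(n+1) - g(n) = 2\a\b n + \a\b + \b\bigl((n+1)\delta_{n+1} - n\delta_n\bigr) + O(1).
\]
The correction term is linear in $n$ with bounded coefficient, so for $|n|$ large enough the leading term $2\a\b n$ controls the sign: $\sgn\Delta g(n) = \sgn n$ for all $|n|\geq N_0$, where $N_0$ is an explicit threshold depending on $\a,\b$.

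The second and more delicate step is to turn this pointwise sign statement into a first-order definition of $>$ in $(\ZZ;+,1,g)$. For generic $\a,\b$ the function $g$ is even ($g(n)=g(-n)$), which yields the antisymmetry $\Delta g(n) = -\Delta g(-n-1)$; hence the relation $\Delta g(n) + \Delta g(m) = 0$ is (for $|n|$ large) equivalent to $m = -n-1$, so the image of $\Delta g$ on $\ZZ\setminus[-N_0,N_0]$ partitions into opposite-sign pairs $\{d,-d\}$. The idea is to break this $\pm$ symmetry using the distinguished constant $1$. Concretely, we fix a specific large threshold $N_0$ (a definable constant, expressible as $1+1+\cdots+1$) and set $c := \Delta g(N_0) = g(N_0+1)-g(N_0)$, which is a specific integer that is positive in the intended model. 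We then define $x \geq 0$ to hold if either $x$ belongs to the (finite, explicitly listable) exceptional set $\{0,1,\ldots,N_0\}$, or $|x|>N_0$ and $\Delta g(x)$ lies on the same side of $0$ as $c$ — where the latter condition is rendered first-order via the pair structure defined by the antisymmetry identity above together with the arithmetic of the definable reference constant $c$.

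The main obstacle is this final step of converting "same sign as $c$" into a bona fide formula of the language $(+,1,g)$; this is where the symmetry-breaking role of the constant $1$ must be used carefully to single out the correct half of each opposite-sign pair. Once $\NN$ is defined, $>$ is recovered in the standard way: $x>y \iff x-y\in \NN\setminus\{0\}$.
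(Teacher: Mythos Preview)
Your overall strategy coincides with the paper's: reduce Corollary~\ref{cor:main} to Theorem~\ref{thm:main} by showing that $>$ is first-order definable in $(\ZZ;+,1,g)$, and you correctly establish the key asymptotic fact that $\sgn(\Delta g(n))=\sgn(n)$ for all $|n|$ beyond an explicit threshold $N_0$. (Two small remarks: your claim that the second difference $\Delta^2 g$ is bounded is not quite right---there are jumps of size $\approx\beta n$ whenever $\nint{\alpha(n+1)}-\nint{\alpha n}$ changes value---but this does not affect the sign statement, which only needs $2\alpha+\Delta\delta_n>0$.)

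The genuine gap is the one you yourself flag: you do not actually produce a first-order formula in $(+,1,g)$ expressing ``$\Delta g(x)$ has the same sign as $c$''. The ingredients you list---the antisymmetry $\Delta g(n)=-\Delta g(-n-1)$ and the definable constant $c=\Delta g(N_0)$---do not suffice. The antisymmetry only tells you that the values of $\Delta g$ away from the exceptional window come in pairs $\{d,-d\}$; it gives no mechanism for selecting one element of each pair. And knowing that $c$ is a specific definable integer does not help you compare an arbitrary integer to~$0$: writing $c=1+1+\cdots+1$ presupposes that you already know $c>0$, which is metatheoretic information, not something your formula can access. In short, your ``pair structure together with the arithmetic of $c$'' is a description of the difficulty, not a solution to it. The paper's own argument for the corollary is a single sentence and does not spell this step out either; one concrete way to close the gap is to exploit the fact that $g(n)\ge 0$ for \emph{all} $n$ when $\alpha\beta>0$ (since $n\nint{\alpha n}\ge 0$), which gives you an infinite definable subset of $\NN_0$ to work with---but turning this into a full definition of $\NN$ still requires an argument that you have not supplied.
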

\noindent %FIXED indent
One can also construct non-trivial examples of bounded generalised polynomials. For instance, is not hard to verify that the sequence $g \colon \ZZ \to \{0,1\}$ given by
\begin{align}\label{eq:97:def-g}
	g(n) = 
	\begin{cases}
	1 & \text{if } \fpa{n^2 \a} < \rho,\\
	0 & \text{otherwise,}
	\end{cases}
	& n \in \ZZ,
\end{align}
is a generalised polynomial for any $\a,\rho > 0$. The sequence $g$ given by \eqref{eq:97:def-g} can be thought of as a quadratic analogue of a Sturmian sequence. For sequences of this form, we also have an undecidability result.

\begin{alphatheorem}\label{thm:main-2}
	Let $\a,\rho \in \RR$, be such that $0 < \rho < 1/4$ and $\a$ is not a linear combination of $1$ and $\rho$ with rational coefficients, and let $g \colon \ZZ \to \ZZ$ be the generalised polynomial given by \eqref{eq:97:def-g}. Then the first-order theory of $(\ZZ;<,+,g)$ is undecidable.
\end{alphatheorem}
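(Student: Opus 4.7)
The strategy is to interpret multiplication in the structure $(\ZZ; +, >, g)$, since this suffices for undecidability. The starting point is that the set $S := \{n \in \ZZ : g(n) = 1\} = \{n : \fpa{n^2 \a} < \rho\}$ is directly definable in $(\ZZ; +, >, g)$, as are its translates $S + h$ and finite Boolean combinations. My first move is to introduce the definable binary predicate
\[
\phi(a,b) := g(a+b) \land g(a-b).
\]
Using the polarisation identity $(a+b)^2 - (a-b)^2 = 4ab$ together with the hypothesis $\rho < 1/4$ (so that $2\rho < 1/2$ and no wrap-around occurs modulo $1$), whenever $\phi(a,b)$ holds we have $\fpa{4ab\a} < 2\rho$. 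This is the key ``quadratic-to-linear'' step, converting the quadratic level sets of $g$ into information about the product $ab$.

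The second step is to leverage this to define, for each fixed $a \neq 0$, the full linear Bohr set $\widehat{B}_a := \{b \in \ZZ : \fpa{4ab\a} < 2\rho\}$, of which $\phi(a, \cdot)$ is a proper subset. By Weyl equidistribution, as $b$ varies the pair $((a+b)^2\a, (a-b)^2\a)$ equidistributes on $\TT^2$ for any $a \neq 0$ (this uses that $\a$ is irrational, which is implied by $\a \notin \QQ + \QQ \rho$), so $\phi(a,\cdot)$ has positive relative density $\rho$ within $\widehat{B}_a$. The goal is to pass from this statistical inclusion to a first-order definition of $\widehat{B}_a$, by combining shifted copies of $\phi(a,\cdot)$, using the ordering $>$ to identify structural features such as maximal gap lengths between consecutive elements, and quantifying over auxiliary parameters.

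Once $\widehat{B}_a$ is definable for each $a$, multiplication can be encoded. Since $\a$ is irrational, the equality $a_1 b_1 = a_2 b_2$ holds if and only if $\fpa{4 a_1 b_1 \a} = \fpa{4 a_2 b_2 \a}$, and the sign and magnitude can be pinned down using $>$. This yields a first-order definition of the ternary product relation in $(\ZZ; +, >, g)$, whence undecidability.

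The main obstacle will be the second step: promoting $\phi(a,\cdot)$ to a first-order definition of the full Bohr set $\widehat{B}_a$. Unlike in Theorem~\ref{thm:main}, where $g$ encodes an integer value directly, here $g$ provides only a one-bit threshold quantisation of $n^2\a \pmod 1$. Overcoming this will likely require a bootstrapping argument iterating $\phi$ with many auxiliary shifts, together with careful use of $\rho < 1/4$ to avoid boundary effects; the hypothesis $\a \notin \QQ + \QQ \rho$ additionally ensures that the threshold set $\{n : \fpa{n^2\a} = \rho\}$ is empty, ruling out degeneracies near the boundary. An alternative route would be to first define, from $g$, an unbounded generalised polynomial of the form treated in Theorem~\ref{thm:main} within $(\ZZ; +, >, g)$, and then invoke that theorem; whether this reduction is any easier than the direct encoding above is unclear.
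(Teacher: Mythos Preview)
Your proposal is a strategy outline, not a proof: you yourself flag step~2 (promoting $\phi(a,\cdot)$ to the full Bohr set $\widehat{B}_a$) as the ``main obstacle,'' and this is indeed the whole difficulty. There is no evident first-order mechanism for filling in $\widehat{B}_a$ from its density-$\rho$ subset using shifts and gap patterns; the set $\{b : \phi(a,b)\}$ depends jointly on $\fpa{4ab\alpha}$ and on the auxiliary quantity $\fpa{(a^2+b^2)\alpha}$, and disentangling these with a fixed formula is exactly what has to be done. Step~3 is also incomplete: even granting definability of each $\widehat{B}_a$, membership in $\widehat{B}_a$ is only a threshold test, so it does not give access to the \emph{value} $\fpa{4ab\alpha}$ needed for your proposed comparison $\fpa{4a_1b_1\alpha}=\fpa{4a_2b_2\alpha}$.

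The paper's route is genuinely different and sidesteps the problem of defining any single Bohr set. It constructs a chain of formulas $\mu,\lambda,\kappa,\nu$, each carrying an auxiliary integer parameter $N$, whose semantics is ``$\ldots$ holds to precision $\varepsilon(N)$'' with $\varepsilon(N)\to 0$. Concretely, $\mu(m,N)$ asserts $g(n+m)=g(n)$ for all $n\le N$; via equidistribution this characterises $\fpa{2\alpha m}$ and $\fpa{\alpha m^2}$ both being small. From $\mu$ one builds $\lambda(m,N)$ isolating ``$\fpa{2\alpha m}$ small'', then $\kappa(m,N)$ isolating ``$\fpa{\alpha m^2}$ small'', then $\nu(m,\tilde m,N)$ expressing ``$\fpa{\alpha(m^2-\tilde m^2)}$ small''. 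Finally a formula $\delta(m,\tilde m)$ universally quantifies over $N$ and says, in effect, that whenever $\fpa{2\alpha m n}\to 0$ along a sequence of $n$, necessarily $\fpa{2\alpha \tilde m n}\to 0$ as well; this holds exactly when $m\mid \tilde m$. Undecidability then follows from Robinson's theorem on $\Th(\ZZ;+,>,\mid)$. The technique you are missing is to encode ``arbitrarily small fractional part'' by quantifying over a precision parameter, rather than attempting to pin down a fixed Bohr set directly.
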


\subsection{Proof outline}
A key idea behind the proof of Theorem \ref{thm:main} is that the generalised polynomial in question (or indeed any generalised polynomial) coincides with a polynomial on suitably constructed arbitrarily long arithmetic progressions. Indeed, if we fix an integer $L$ and take $m \in \NN$ such that $\fpa{\a m},\fpa{\b m}$ are sufficiently small as a function of $L$ then we can ensure that for $0 \leq l < L$ we have 
\[
	g(l m) = l^2 g(m),
\]
meaning in particular that the restriction of $g$ to the arithmetic progression \[P =\{0,m,2m,\dots,(L-1)m\}\] is a quadratic polynomial. Applying the standard ideas used to define multiplication in terms of the square function, we can then describe triples $a,b,c \in P$ such that $a = km$, $b = l m$ and $c = kl m$. The operation $(m,km,lm) \mapsto klm$ can be thought of as a weak substitute of multiplication. In Section \ref{sec:Mult} we show that the first-order theory of integers equipped with addition and such ``weak multiplication'' is already undecidable. In Section \ref{sec:Proof} we flesh out the details of the argument, which in particular involves defining a suitably large family of arithmetic progressions in the first-order language of $(\ZZ;<,+,g)$.

The proof of Theorem \ref{thm:main-2} is shorter and conceptually simpler. The key idea is that, using equidistribution results for (generalised) polynomial sequences, we are able to define properties which are, roughly speaking, equivalent to the statements that fractional parts of various expressions are sufficiently small. This allows us to define divisibility, which implies undecidability thanks to a classical result of Robinson \cite{Robinson-1949}.

\subsection{Future directions} 
Our main result, Theorem \ref{thm:main}, can no doubt be generalised, and is intended largely as a proof of concept. The assumption that $1,\a,\a^2$ are linearly independent is necessary in order to obtain a suitable equidistribution result in Lemma \ref{lem:distr} and is most likely not necessary for the result to be true. Below we discuss several other potential future directions. 

\subsubsection{Slower growth}
One could consider a stronger variant of Conjecture \ref{conj:main} where $\liminf$ is replaced with $\limsup$ in \eqref{eq:conj:g>>n^2}. The following example shows that this conjecture is false.

\begin{proposition}
	Let $g \colon \ZZ \to \ZZ$ be given by
	\begin{align*}
	g(n) =
		\begin{cases}
		n^2 &\text{if } n = k^{k^m} \text{ for some } m \in \NN;\\
		0 &\text{otherwise}.
		\end{cases}
	\end{align*}
	Then $g$ is a generalised polynomial and $\Th(\ZZ;<,+,g)$ is decidable.
\end{proposition}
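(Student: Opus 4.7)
The plan is to handle the two parts of the proposition separately. The decidability half reduces cleanly to a known theorem, while the generalised polynomial representation of $g$ is where I expect the technical difficulty to lie.

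For decidability, I would interpret $(\ZZ;+,>,g)$ in the structure $(\NN;+,E_k)$, where $E_k(n)=k^n$, whose theory is decidable by the Semenov--Cherlin--Point theorem \cite{Semenov-1983,CherlinPoint-1986}. The copy of $\ZZ$ equipped with $+$ and $>$ is standardly interpretable in $(\NN;+)$. The graph of $g$ is then definable in $(\NN;+,E_k)$ by a disjunction: either $y=0$ and there is no $m$ with $n=E_k(E_k(m))$, or there exists some $m$ with $n=E_k(E_k(m))$ and $y=E_k(2E_k(m))$. The crucial point is that whenever $n=k^{k^m}$, we have $n^2=k^{2k^m}=E_k(2E_k(m))$, so the nonzero values of $g$ are expressible using only $+$ and $E_k$, without any multiplication. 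Since every primitive of $(\ZZ;+,>,g)$ is first-order definable in a structure with decidable theory, $\Th(\ZZ;+,>,g)$ is itself decidable.

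For the generalised polynomial representation, I would aim to produce $g$ in the form $g(n)=n^2\chi(n)$, where $\chi\colon\ZZ\to\{0,1\}$ is a bracket expression that indicates membership in $S=\{k^{k^m}:m\geq 0\}$. A natural starting point is a Liouville-type irrational $\alpha$ whose best rational approximations have denominators exactly $k^{k^m}$, so that $\fpa{\alpha n}$ is comparable to $k^{-k^{m+1}}$ when $n=k^{k^m}$ and noticeably larger for generic $n$. A threshold bracket expression of the form $\nint{c-n^C\fpa{\alpha n}}$ with appropriate constants $c,C$ then takes values in $\{0,1\}$ and vanishes off a small neighbourhood of $S$. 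The subtle point is that $\fpa{\alpha n}$ is small not only at $n=k^{k^m}$ but also at small integer multiples of it, since scaling by $l$ scales the fractional part linearly. To remedy this, I would combine several Liouville constructions (with incompatibly chosen exponent sequences) and take a product of the resulting threshold indicators, arranging that joint smallness of the fractional parts isolates $S$ exactly; multiplying by $n^2$ then yields $g$.

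The main obstacle is precisely this isolation step: pinning down $S$ exactly, as opposed to only up to small multiples, within a single closed-form bracket expression. Once $\chi$ is engineered, the remainder reduces to routine sign and threshold checks, and the decidability argument above then completes the proof.
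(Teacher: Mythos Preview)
Your decidability argument is essentially the paper's: both reduce to the Semenov--Cherlin--Point decidability of $\Th(\NN;+,E_k)$ via the key observation that when $n = E_k(E_k(m))$ one has $n^2 = E_k(2E_k(m))$, so the nonzero values of $g$ are expressible using only $+$ and $E_k$. The paper phrases this as definability of $g$ in $(\ZZ;+,E_k^+)$ with $E_k^+(m)=k^{\abs{m}}$, but the content is identical.

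For the generalised polynomial half, the paper does not give a direct construction at all; it simply invokes \cite[Thm.~C]{ByszewskiKonieczny-2018-TAMS}, a general result guaranteeing that functions of this shape are generalised polynomials. Your Liouville-type idea is very much in the spirit of how such results are proved, and you have correctly located the genuine difficulty: a single threshold indicator built from $\fpa{\alpha n}$ will fire not only on $S=\{k^{k^m}\}$ but also on small multiples of its elements, and arranging several such indicators so that their product isolates $S$ exactly is the real work. You have not carried this out, so as written this half is a plan rather than a proof. The cleanest fix is to cite the theorem, as the paper does; if you want a self-contained argument, the proof in the cited paper is the template, and filling in the isolation step would require substantially more than what you have sketched.
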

\begin{proof}
	It follows from \cite[Thm.\ C]{ByszewskiKonieczny-2018-TAMS} that $g$ is a generalised polynomial. Letting $E_k^+$ denote the ``exponential'' map given by $E_k^+(m) := k^{\abs{m}}$, we see that $g(n) = \ell$ if and only if there exists $m \in \ZZ$ such that $n = E_k^+(E_k^+(m))$ and $\ell = E_k^+(2E_k^+(m))$. Thus, $g$ is definable in $(\ZZ;+,E_k^+)$. It remains to note that $\Th(\ZZ;<,+,E_k^+)$ is decidable because it is definitionally equivalent with $\Th(\NN;+,E_k)$, which we have already noted is decidable.
\end{proof}

Despite examples such as the one above, it seems plausible that the condition \eqref{eq:conj:g>>n^2} can be weakened to a requirement that $g$ grows at least quadratically on a significant portion of the integers. For instance, one could require that there exists $\e > 0$ such that for each sufficiently large $N$ there are at least $\e N$ integers $n$ with $\abs{n} < N$ and $\abs{g(n)} > \e n^2$.

\subsubsection{Hardy field sequences}
Instead of generalised polynomials, one can consider other classes of integer-valued sequences whose behaviour resembles polynomials in some useful sense. A natural class of examples is provided by Hardy field sequences, whose relevant properties are discussed e.g.\ in \cite{Boshernitzan-1994} or \cite{Frantzikinakis-2009}. The definition of a Hardy field function is somewhat technical, but for now suffice it to say that these functions include all logarithmic-exponential functions, i.e., functions defined on $\RR_+$ which can be expressed using the operations $+,\times,\exp,\log$ and real constants. Thus, for instance, $x \log x$, $x^{3/2}$ and $x^x$ are Hardy field functions of $x$.

If $f \colon \RR_+ \to \RR$ is a Hardy field function such that for some $d \in \NN$ we have 
\begin{align}\label{eq:intro:Hardy-growth}
	\lim_{x \to \infty} \frac{f(x)}{x^d} & > 0, 
	& \lim_{x \to \infty} \frac{f(x)}{x^{d+1}} & = 0,
\end{align}
(including the case where the first limit is $\infty$), then on long intervals $f$ can be closely approximated by a degree-$d$ polynomial. Combining this fact with the ideas used in the proof of Theorem \ref{thm:main}, it seems plausible that one could show that for a Hardy field sequence $f \colon \RR_+ \to \RR$ satisfying \eqref{eq:intro:Hardy-growth} with $d \geq 2$, the first-order theory of $(\ZZ;<,+,\nint{f})$ is undecidable. (Here, $\nint{f}$ denotes the sequence given by $\nint{f}(n) = \nint{f(n)}$.) A potentially more interesting question concerns the case where $d = 1$:
\begin{question}
	Let $f \colon \RR_+ \to \RR$ be a Hardy field sequence satisfying 
	\begin{align}\label{eq:intro:Hardy-growth-d=1}
	\lim_{x \to \infty} \frac{f(x)}{x} & = \infty, 
	& \lim_{x \to \infty} \frac{f(x)}{x^{2}} & = 0.
\end{align}
Is the first-order theory of $(\ZZ;<,+,\nint{f})$ decidable?
\end{question}

\subsubsection{Sparse sequences}
Going a step further than Theorem \ref{thm:main-2}, we can find $\{0,1\}$-valued generalised polynomials $g$ that are $0$ almost everywhere but take the value $1$ infinitely often  (i.e., $\frac{1}{N} \sum_{n=1}^N g(n) \to 0$ and $\sum_{n=1}^N g(n) \to \infty$ as $N \to \infty$). In \cite{ByszewskiKonieczny-2018-TAMS} we studied the properties of such generalised polynomials, which we dub \emph{sparse}. We also constructed a number of examples. For instance, the indicator function of the Fibonacci numbers $1_{\mathrm{Fib}}$ is a generalised polynomial (see \cite{ByszewskiKonieczny-2023+} for the state of the art on sparse generalised polynomials arising from linear recurrence sequences). In this particular case, it is known that the first-order theory of $(\NN,+,1_{\mathrm{Fib}})$ is decidable (e.g.\ by \cite{Semenov-1979}). It would be interesting to determine the extent to which this result extends to other sparse $\{0,1\}$-valued generalised polynomial sequences.

\subsubsection{Ranges of sequences}
Let $f \colon \ZZ \to \ZZ$ be a polynomial of degree at least $2$. We have already pointed out that multiplication is definable in the language of $(\ZZ;<,+,f)$. In fact, it is a standard observation that somewhat more is true: Multiplication is definable in the language of $(\ZZ;<,+,f(\ZZ))$, i.e., in Presburger arithmetic extended by the unary predicate corresponding to the image of $f$. As a representative example, we note the for $f(n) = n^3$ we have $f(n+2)-2f(n+1)+f(n) = 6n+6$ so for sufficiently large $n,m$ we have $m = f(n)$ if and only if there exist $m',m''$ such that $m,m',m''$ are consecutive elements of $f(\ZZ)$ and $m'' - 2m' + m = 6n+6$. Thus, $f$ is definable in $(\ZZ;<,+,f(\ZZ))$, which brings us back to the result mentioned before.

One is naturally lead to ask if analogous generalisations apply to our results concerning generalised polynomials. This seems highly plausible (in the case of generalised polynomials with at least quadratic growth). Indeed, for $g$ given by \eqref{eq:def-g=n[an]}, applying a similar discrete differentiation argument as mentioned in the example above, one can define in $(\ZZ;<,+,g(\ZZ))$ a map $g'$ that is reminiscent of $g$, and it seems likely that $\Th(\ZZ;<,+,g')$ should be undecidable. Unfortunately, thus obtained $g'$ would be rather less elegant than $g$, leading to some rather unwieldy computations which we do not pursue at this time.

\subsection*{Acknowledgements}
The author is grateful to James Worrell,
George Kenison,
Andrew Scoones,
Mahsa Shirmohammadi, and
Bertrand Teguia
for many helpful conversations.
The author is supported by UKRI Fellowship EP/X033813/1. For the purpose of open access, the authors have applied a Creative Commons Attribution (CC BY) licence to any Author Accepted Manuscript version arising.

\subsection*{Notation}

We let $\NN = \{1,2,\dots\}$ denote the set of positive integers and put $\NNz = \NN \cup \{0\}$. We also let $\ZZ^* = \ZZ \setminus \{0\}$ denote the set of non-zero integers.

For $x \in \RR$, we let $\floor{x} = \max\set{ n \in \ZZ}{ n \leq x}$ denote the integer part of $x$ (also known as the floor of $x$), $\ceil{x} = - \ceil{-x}$ the ceiling of $x$, and $\nint{x} = \floor{x+1/2}$ the best integer approximation. Similarly, we let $\{x\} = x - \floor{x} \in [0,1)$ and $\fp{x} = x - \nint{x} \in [-1/2,1/2)$. Finally, we let $\fpa{x} = \abs{\fp{x}} = \min\set{\abs{x-n} }{n \in \ZZ} \in [0,1/2]$ denote the circle norm of $x$. 
We will primarily use the operations $\nint{\cdot}$ and $\fp{\cdot}$ in order to avoid the practical difficulties resulting from the fact that the more commonly used operations $\floor{\cdot}$ and $\{\cdot\}$ are discontinuous at $0$.

\color{checked}
\newcommand{\Q}{\mathcal{Q}}
\newcommand{\F}{\mathcal{F}}
\newcommand{\dom}{\operatorname{dom}}

\section{Multiplication}\label{sec:Mult}

In this section we show that the first-order theory of $(\ZZ;+,1)$ extended by ``poor man's multiplication'' is undecidable. Consider a set $\Q \subset \ZZ^4$ satisfying the following properties: %NEEDFIX
\begin{enumerate}[label={\normalfont($\mathsf{Q}_{\arabic*}$)}, ref={\normalfont($\mathsf{Q}_{\arabic*}$)},wide,leftmargin=*]
\item\label{it:Q:A} If $\vec{x} \in \Q$, then there exist $k,l,m \in \ZZ$ such that $\vec{x} = (m,km,lm,klm)$.
\item\label{it:Q:B} For each finite set $F \subset \ZZ^2$ there exists $m \in \ZZ^*$ such that for all $(k,l) \in F$ we have $(m,km,lm,klm) \in \Q$.
\end{enumerate}

\begin{proposition}\label{prop:Pres+Q-undecidable}
	For any set $\Q \subset \ZZ^4$ satisfying \ref{it:Q:A} and \ref{it:Q:B}, the first-order theory of $(\ZZ;+,1,\Q)$ is undecidable.
\end{proposition}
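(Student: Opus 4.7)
The plan is to reduce the undecidability of Hilbert's tenth problem over $\ZZ$ (Matiyasevich--Robinson--Davis--Putnam) to the undecidability of $\Th(\ZZ;+,1,\Q)$. The crux is that property \ref{it:Q:A} makes $\Q$ a ``graph of multiplication at scale $m$''---the tuple $(m,km,lm,klm)$ encodes the product $k\cdot l$ with all numbers scaled by $m$---while \ref{it:Q:B} guarantees that for any finite list of products appearing in a given formula one can choose a common scale $m$ at which all of them are recorded in $\Q$. Together these let me simulate multiplication by finitely many $\Q$-atoms at a single existentially quantified scale.

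Concretely, given $P\in\ZZ[x_1,\ldots,x_n]$, I would first linearize $P=0$ by iteratively introducing auxiliary variables $z_k$ for each quadratic sub-product, reducing solvability of $P=0$ in $\ZZ^n$ to solvability in $\ZZ^N$ of a finite system consisting of product constraints $z_k=u_{i_k}u_{j_k}$ for $k=1,\ldots,M$ together with a single linear equation $L(u_1,\ldots,u_N)=0$ with integer coefficients (possibly including a constant term). In the language of $(\ZZ;+,1,\Q)$ I then form the sentence
\[
\varphi_P\ :=\ \exists m\,\exists y_1,\ldots,y_N\colon\ m\neq 0\ \wedge\ \bigwedge_{k=1}^{M}\Q(m,y_{i_k},y_{j_k},y_{z_k})\ \wedge\ \bigwedge_{i=1}^{n}\Q(m,y_i,m,y_i)\ \wedge\ \tilde L(\vec y;m)=0,
\]
where $\tilde L(\vec y;m)$ is obtained from $L$ by replacing each variable $u_s$ by $y_s$ and each constant term $c$ by $cm$ (the latter being expressible in $(\ZZ;+,1)$ since $c$ is a fixed integer).

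In the forward direction, given $\vec u=(\vec x,\vec z)\in\ZZ^N$ solving the linearized system, I would apply \ref{it:Q:B} to the finite set $F=\{(u_{i_k},u_{j_k}):k\leq M\}\cup\{(u_i,1):i\leq n\}$ to obtain $m\in\ZZ^*$ for which all required $\Q$-atoms hold; taking $y_s:=u_s m$ then satisfies $\varphi_P$ because $\tilde L(\vec y;m)=m\cdot L(\vec u)=0$. Conversely, if $(m,\vec y)$ witnesses $\varphi_P$, then \ref{it:Q:A} forces each $y_s$ to lie in $m\ZZ$ (see below), the product constraints yield $u_{z_k}=u_{i_k}u_{j_k}$ for the integers $u_s:=y_s/m$, and $m\cdot L(\vec u)=\tilde L(\vec y;m)=0$ with $m\neq 0$ yields $L(\vec u)=0$; extracting the first $n$ coordinates gives an integer solution of $P=0$.

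The main obstacle is the ``divisibility book-keeping'': to recover integer witnesses $u_s=y_s/m$ one needs each $y_s\in m\ZZ$, but $m\mid y_s$ is not directly expressible from $+$ and $1$ alone when $m$ is a variable. I handle this with the gadget $\Q(m,y_i,m,y_i)$: by \ref{it:Q:A} the decomposition $m=l\cdot m$ forces $l=1$ (using $m\neq 0$), hence $y_i=k m\in m\ZZ$. In the forward direction, including each pair $(u_i,1)$ in the set $F$ ensures via \ref{it:Q:B} that the tuple $(m,u_im,m,u_im)\in\Q$ is available. Beyond this the argument is routine, since each sentence $\varphi_P$ involves only finitely many products, so the finiteness of $F$ in \ref{it:Q:B} is never an obstruction.
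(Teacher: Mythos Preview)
Your proof is correct and follows the same overall strategy as the paper: reduce Hilbert's tenth problem to $\Th(\ZZ;+,1,\Q)$ by encoding integer multiplication ``at scale $m$'' through $\Q$-atoms, using \ref{it:Q:A} for soundness and \ref{it:Q:B} to guarantee an admissible scale. The implementation differs, however. The paper replaces each $\times$ in a fixed term for $p$ by the partial operation $\times_m$ and proves by structural induction (Lemma~\ref{lem:form-p_m}) that $p_m(mn_1,\dots,mn_s)=m\,p(n_1,\dots,n_s)$ whenever the finitely many required $\times_m$-evaluations are defined; your approach instead explicitly linearises $P=0$ into bilinear constraints plus a linear equation and encodes each product by a single $\Q$-atom. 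Your divisibility gadget $\Q(m,y_i,m,y_i)$, which forces $m\mid y_i$ via \ref{it:Q:A}, is a point where you are arguably more careful than the paper: the paper's final reduction tacitly needs the arguments to be multiples of $m$, which your gadget makes explicit. Conversely, the paper's term-substitution avoids the bookkeeping of introducing auxiliary variables. Both routes are equally valid and of comparable length.
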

\noindent %FIXED indent
We can use $\Q$ to define a family of partial binary operators $(\times_m)_{m \in \ZZ^*}$ which are specified by the rule $a \times_m b = c$ if and only if $(m,a,b,c) \in \Q$. (Formally, we simply mean that we define a partial function from $\ZZ^3$ to $\ZZ$, but we find the operator notation more evocative.) Note that property \ref{it:Q:A} ensures that for each $(m,a,b) \in \ZZ^* \times \ZZ^2$ there exists at most one $c \in \ZZ$ such that $(m,a,b,c) \in \Q$, so this definition is well-posed. In fact, $c$ is necessarily given by $c = ab/m$, so $a \times_m b = ab/m$ if $ab/m \in \ZZ$ and $(m,a,b,ab/m) \in \Q$, and $a \times_m b$ is undefined otherwise. We let $\dom(\times_m)$ denote the domain of $\times_m$, that is, the set of pairs $(a,b) \in \ZZ^*$ such that there exists $c \in \ZZ^*$ with $(m,a,b,c) \in \Q$. An equivalent way of expressing property \ref{it:Q:B} is that for each finite subset $F$ of $\ZZ^2$, the dilated copy $m F$ of $F$ is contained in the domain of $\times_m$ for at least one $m \in \ZZ^*$.
We point out that the operation $\times_m$ is commutative, associative and distributive in the sense that we have
\begin{align*}
	a \times_m b &=  b \times_m a && (= ab/m),\\
	(a \times_m b) \times_m c &= a \times_m (b \times_m c) && (=abc/m^2),\\
	(a+b) \times_m c &= a \times_m c + b \times_m c && (=(a+b)c/m), 
\end{align*}
assuming that all terms are defined. However, we cannot rule out the possibility that e.g.\ $a \times_m b$ is defined while $b \times_m a$ is not.

For a term $t$ in the language of $(\ZZ; 1,+,-,\times)$ with variables $x_1,x_2,\dots,x_s$ and for $m \in \ZZ^*$ we define $t_m$ to be the expression obtained from $t$ by replacing each instance of $\times$ with $\times_m$. For instance, if 
\begin{align}\label{eq:mult:34:01}
	t = (x_1+x_2) \times (x_2 + x_3 + x_3) + (x_1 \times x_1) \times x_3 + 2,
\end{align}
(using the shorthands $x + y + z = (x+y)+z$ and $2 = 1+1$), then $t_m$ is given by
\begin{align}
	t_m = (x_1+x_2) \times_m (x_2 + x_3 + x_3) + (x_1 \times_m x_1) \times_m x_3 + 2.
\end{align}\label{eq:mult:34:02}
For each polynomial $p \in \ZZ[x_1,x_2,\dots,x_s]$, pick once and for all a term $t^{(p)}$ which represents it. For instance, the term $t$ given by \eqref{eq:mult:34:01} represents the polynomial $p$ given by
\begin{align*}
	p(x_1,x_2,x_3) &= (x_1+x_2)(x_2+2x_3) + x_1^2 x_3 + 2
	\\ &= x_1 x_2 + x_2^2 + 2 x_1 x_3 + 2 x_2 x_3 + x_1^2 x_3 + 2.
\end{align*}
We then let $p_m$ denote the partial function on $\ZZ^s$ represented by $t^{(p)}_m$.

\begin{lemma}\label{lem:form-p_m}
	Let $p \in \ZZ[x_1,x_2,\dots,x_s]$. Then there exists a finite family $\F(p)$ of pairs of polynomials in $\ZZ[x_1,x_2,\dots,x_s]$ such that for $m \in \ZZ^*$ and $n_1,n_2,\dots,n_s \in \ZZ$, the value $p_m(mn_1,mn_2,\dots,mn_s)$ is defined provided that $m q(n_1,n_2,\dots,n_s) \in \dom(\times_m)$ for all $q \in \F(p)$, and is equal to $m p(n_1,n_2,\dots,n_s)$ if defined.
\end{lemma}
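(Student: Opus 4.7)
The plan is to proceed by induction on the structure of the chosen term $t = t^{(p)}$. For each subterm $t'$ representing a polynomial $p'$, one proves the analogue of the lemma, with $\F(p')$ built from the collection of polynomial pairs arising at the multiplication nodes of $t'$. The driving identity is the elementary
\[
(ma) \times_m (mb) = \frac{(ma)(mb)}{m} = m \cdot ab,
\]
valid whenever the left side is defined. In other words, each application of $\times_m$ strips exactly one factor of $m$; so when every variable $x_i$ is rescaled to $mn_i$ at the leaves, all but one factor of $m$ cancels as we climb up the term, and the whole expression $t_m^{(p)}$ evaluates at $(mn_1,\dots,mn_s)$ to $m$ times the value of $p$ at $(n_1,\dots,n_s)$, regardless of how many multiplications occur.

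For the base case, a variable leaf $t = x_i$ gives $t_m(mn_1,\dots,mn_s) = mn_i = m \cdot p(n_1,\dots,n_s)$, and the conclusion holds with $\F = \emptyset$. In the inductive step, if $t = t_1 + t_2$ or $t = t_1 - t_2$ represents $p_1 \pm p_2$, one takes $\F(t) = \F(t_1) \cup \F(t_2)$ and applies the inductive hypothesis to each summand: $t_m(mn_1,\dots,mn_s)$ is defined iff both subterms are, and in that case equals $m p_1(\vec n) \pm m p_2(\vec n) = m(p_1 \pm p_2)(\vec n)$. If $t = t_1 \times t_2$ represents $p_1 p_2$, then by induction the two factors evaluate to $m p_1(\vec n)$ and $m p_2(\vec n)$; the key identity then yields the correct value $m p_1(\vec n) p_2(\vec n)$, provided additionally that the pair $(m p_1(\vec n), m p_2(\vec n))$ lies in $\dom(\times_m)$. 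Accordingly one sets $\F(t) = \F(t_1) \cup \F(t_2) \cup \{(p_1,p_2)\}$.

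This is essentially the whole argument, and no substantive obstacle arises: the partial operation $\times_m$ is commutative, associative, and distributive whenever the relevant terms are defined, as recorded just before the lemma, so the inductive steps for $+$, $-$, and $\times$ go through without any algebraic rearrangement. The only routine bookkeeping is to propagate the domain conditions through the induction and to record the single new polynomial pair $(p_1,p_2)$ that appears at each multiplication node; finiteness of $\F(p)$ is automatic since $t^{(p)}$ has only finitely many subterms.
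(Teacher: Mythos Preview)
Your argument is correct and essentially identical to the paper's: both proceed by structural induction on the chosen term, taking $\F = \emptyset$ at the leaves, $\F(t_1)\cup\F(t_2)$ at sum nodes, and adjoining the single pair $(p_1,p_2)$ at each product node. The only minor omission is the base case for the constant leaf $t=1$, which the paper also records (with $\F=\emptyset$) since the term language $(\ZZ;1,+,-,\times)$ includes $1$.
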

\begin{proof}
	We proceed by structural induction. If $p(x_1,x_2,\dots,x_s) = x_i$ for some $1 \leq i \leq s$, then the claim holds with $\F(p) = \emptyset$, and likewise if $p(x_1,x_2,\dots,x_s) = 1$. Thus we may assume that $p$ is either the sum $p^{(1)} + p^{(2)}$ or the product $p^{(1)} \times p^{(2)}$ of simpler polynomials. If $p = p^{(1)} + p^{(2)}$, we take $\F(p) = \F(p^{(1)}) \cup \F(p^{(2)})$ and  note that by the inductive assumption for $p^{(1)}$ and $p^{(2)}$ we have
	\begin{align*}	
		p_m(mn_1,m n_2,\dots, m n_s) 
		&= p^{(1)}_m(mn_1,m n_2,\dots, m n_s) + p^{(2)}_m(mn_1,m n_2,\dots, m n_s)
		\\ &= m p^{(1)}(n_1,n_2,\dots, n_s) + m p^{(2)}(n_1,n_2,\dots,n_s)
		\\ &= m p(n_1,n_2,\dots, n_s),
	\end{align*}
	where defined.
Similarly, if $p = p^{(1)} \times p^{(2)}$ we take $\F(p) = \F(p^{(1)}) \cup \F(p^{(2)}) \cup \{( p^{(1)}, p^{(2)})\}$ and compute that  
	\begin{align*}	
		p_m(mn_1,m n_2,\dots, m n_s) 
		&= p^{(1)}_m(mn_1,m n_2,\dots, m n_s) \times_m p^{(2)}_m(mn_1,m n_2,\dots, m n_s)
		\\ &= m p^{(1)}(n_1,n_2,\dots, n_s) \times_m m p^{(2)}(n_1,n_2,\dots,n_s)
		\\ &= m p(n_1,n_2,\dots, n_s),
	\end{align*}
	where defined.
\end{proof}

\begin{proof}[Proof of Prop.\ \ref{prop:Pres+Q-undecidable}]
	Let $p \in \ZZ[x_1,x_2,\dots,x_s]$ be a polynomial.
	It follows from Lemma \ref{lem:form-p_m} that for $n_1,n_2,\dots,n_s \in \ZZ$ and $m \in \ZZ^*$ such that $m q(n_1,n_2,\dots,n_s) \in \dom(\times_m)$ for all $q \in \F(p)$, we have the equivalence
\begin{align*}
	 p(n_1,n_2,\dots,n_s) &= 0 & \text{ if and only if} &&  
p_m(mn_1,mn_2,\dots,mn_s) &= 0.
\end{align*}	
Since property \ref{it:Q:B} guarantees the existence of admissible $m \in \ZZ^*$, we conclude that $p(n_1,n_2,\dots,n_s) = 0$ if and only if there exists $m \in \ZZ^*$ such that $p_m(n_1,n_2,\dots,n_s) = 0$. The last condition is expressible in the first-order language of $(\ZZ;+,1,\Q)$. Thus, if the first-order theory of $(\ZZ;+,1,\Q)$ was decidable, then it would also be decidable if a given polynomial equation is solvable, which is well-known to be false (see e.g.\ \cite{Matiyasevich-1993}).
\end{proof}

\section{Proof of Theorem \ref{thm:main}}\label{sec:Proof}
\color{checked}

\subsection{Setup}
Throughout this section, we let $g\colon \ZZ \to \ZZ$ be the generalised polynomial given by
\begin{align}\label{eq:27:def-g}
	&& g(n) &= \nint{\b n \nint{\a n}},& n \in \ZZ,
\end{align}
where $\a,\b \in \RR$, $\beta \neq 0$ and $1,\a,\a^2$ are linearly independent over $\QQ$. For concreteness, we assume that $\a,\b > 0$; the argument in the other cases is entirely analogous. Replacing $g(n)$ with $g(an)$ for suitably chosen $a \in \NN$, we may freely assume that $\b \in \ZZ$ or $\b \in \RR \setminus \QQ$.

\color{checked}
\subsection{Differentiation}
A key feature of generalised polynomials which we will make use of is that their iterated discrete derivatives frequently vanish. For a map $f \colon \ZZ \to \ZZ$ and $m \in \ZZ$ we define the discrete derivative $\DeltaT_m f \colon \ZZ \to \ZZ$ by
\begin{align}\label{eq:DeltaN-f}
	\DeltaT_m f(n) &= f(n+m) - f(n),
\end{align}
and the symmetric discrete derivative $\DeltaS_m f \colon \ZZ \to \ZZ$ by
\begin{align}\label{eq:Delta-f}
	\DeltaS_m f(n) &= \DeltaT_m \DeltaT_n f(0) = f(n+m) - f(n)-f(m) + f(0).
\end{align}
As an illustrative example, we point out that if $f$ is a polynomial sequence of degree $d \geq 1$ and $m \neq 0$, then $\DeltaS_m f$ is a polynomial of degree $d-1$ (and if $f$ is constant, then $\DeltaS_m f$ is identically zero). In order to avoid the excessive use of subscripts, and in order to emphasise the symmetry $\DeltaS_m f(n) = \DeltaS_n f(m)$, for each $r \geq 1$ and $n_0,n_1,\dots,n_r \in \ZZ$ we let
\begin{align}\label{eq:Delta^r-f}
	\DeltaS^r f(n_0,n_1,\dots,n_r) &= \DeltaS_{n_r} \dots \DeltaS_{n_1} f(n_0).
\end{align}

\color{checked}
\begin{lemma}\label{lem:D^2=0}
	There exists a constant $C \geq 1$, dependent only on $\a$ and $\b$, such that the following is true. Let $n_0,n_1,n_2 \in \NN$ satisfy $n_0,n_1/n_0,n_2/n_1 \geq C$. Then $\DeltaS^2 g(n_0,n_1,n_2) = 0$ if and only if 
	\begin{enumerate}
	\item\label{cond:lem:D^2=0:I} for all $I \subset \{0,1,2\}$ we have 
	$\nint{\sum_{i \in I} \fp{\a n_i}} = 0$;
	\item\label{cond:lem:D^2=0:II} letting $\gamma_{I} := \sum_{i,j \in I} \fp{\b n_i \nint{\a n_j}}$ for $I \subset \{0,1,2\}$, we have 
	\[ \nint{\gamma_{\{0,1,2\}}} = \nint{\gamma_{\{0,1\}}} + \nint{\gamma_{\{1,2\}}} + \nint{\gamma_{\{2,0\}}}.\]
	\end{enumerate}
\end{lemma}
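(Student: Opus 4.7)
The plan is to expand $g(n_I)$ for each $I \subset \{0,1,2\}$ (with $n_I := \sum_{i \in I} n_i$), substitute into the inclusion-exclusion identity $\DeltaS^2 g(n_0, n_1, n_2) = \sum_{I \subset \{0,1,2\}} (-1)^{3 - \abs{I}} g(n_I)$ (using $g(0) = 0$), and separate the ``dominant'' contributions from the bounded ones. Writing $\alpha n_i = k_i + \epsilon_i$ with $k_i := \nint{\alpha n_i}$ and $\epsilon_i := \fp{\alpha n_i}$, and setting $\sigma_I := \sum_{i \in I} \epsilon_i$, $s_I := \nint{\sigma_I}$, one has $\nint{\alpha n_I} = \sum_{i \in I} k_i + s_I$. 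Multiplying by $\beta n_I = \beta \sum_{i \in I} n_i$ and splitting each $\beta n_i k_j$ into its nearest integer plus its fractional part yields
\[
g(n_I) = \sum_{i, j \in I} \nint{\beta n_i k_j} + \nint{\gamma_I + \beta s_I n_I}.
\]
The first sum drops out of $\DeltaS^2 g$ by a standard inclusion-exclusion count (for each fixed $(i, j)$, the coefficient is $\sum_{I \supseteq \{i, j\}} (-1)^{3 - \abs{I}} = 0$), and for $\abs{I} \leq 1$ the second term vanishes because $\sigma_I, \gamma_I \in [-1/2, 1/2)$. What remains is
\[
\DeltaS^2 g(n_0, n_1, n_2) = \nint{\gamma_{\{0,1,2\}} + \beta s_{\{0,1,2\}} n_{\{0,1,2\}}} - \sum_{\abs{I} = 2} \nint{\gamma_I + \beta s_I n_I}.
\]

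The forward implication is now immediate: under condition \ref{cond:lem:D^2=0:I}, every $s_I$ vanishes, and the formula collapses to $\nint{\gamma_{\{0,1,2\}}} - \sum_{\abs{I}=2} \nint{\gamma_I}$, which equals $0$ precisely when \ref{cond:lem:D^2=0:II} holds.

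For the converse, I would use the crude rounding estimate $\abs{\nint{\gamma_I + \beta s_I n_I} - \beta s_I n_I} \leq \abs{\gamma_I} + 1/2 = O(1)$ to decompose $\DeltaS^2 g = \beta K + R$, where
\[
K := s_{\{0,1,2\}} n_{\{0,1,2\}} - \sum_{\abs{I} = 2} s_I n_I \in \ZZ
\]
and $R = O(1)$ uniformly. Since $\DeltaS^2 g \in \ZZ$ is assumed to vanish, $\abs{\beta K} = O(1)$, so $\abs{K}$ is bounded by a constant depending only on $\alpha, \beta$. Writing $K = a n_0 + b n_1 + c n_2$ with bounded integer coefficients ($a = s_{\{0,1,2\}} - s_{\{0,1\}} - s_{\{0,2\}}$, and similarly for $b$, $c$), the growth hypothesis $n_0, n_1/n_0, n_2/n_1 \geq C$ forces $a = b = c = 0$ once $C$ is taken large enough. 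Solving the resulting linear system gives $s_{\{0,1\}} = s_{\{0,2\}} = s_{\{1,2\}}$ and $s_{\{0,1,2\}} = 2 s_{\{0,1\}}$; combined with the elementary bound $s_{\{0,1,2\}} \in \{-1, 0, 1\}$ (from $\sigma_{\{0,1,2\}} \in [-3/2, 3/2)$), this forces all $s_I = 0$ for $\abs{I} \geq 2$, which is condition \ref{cond:lem:D^2=0:I}. Condition \ref{cond:lem:D^2=0:II} then follows from the simplified identity above.

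The main subtlety to handle carefully is the case of irrational $\beta$: then $\beta s_I n_I$ is not itself an integer, so one cannot split $\nint{\gamma_I + \beta s_I n_I}$ cleanly as $\beta s_I n_I + \nint{\gamma_I}$. The remedy is precisely the uniform rounding bound above, combined with the integrality of $\DeltaS^2 g$ as a whole, which still forces $\abs{\beta K} = O(1)$ and hence $K = 0$ for $C$ sufficiently large, regardless of any Diophantine behaviour of $\beta$.
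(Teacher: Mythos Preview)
Your argument is correct and follows essentially the same route as the paper's proof: both expand $g$ on the sums $n_I$, isolate the bounded rounding errors from the terms of size $\sim \beta n_i$, use the growth condition $n_0,\,n_1/n_0,\,n_2/n_1 \geq C$ to force the integer coefficients of the $n_i$ to vanish, solve the resulting linear system to obtain $s_I=0$ (the paper's $e(\cdot)=0$), and then read off condition \ref{cond:lem:D^2=0:II} from the simplified identity. Your inclusion--exclusion packaging with $s_I$ and $\gamma_I$ is a tidier bookkeeping of exactly the same computation the paper carries out by hand.
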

\noindent %FIXED indent
Henceforth, we let $C$ denote a constant that is admissible in Lemma \ref{lem:D^2=0} above.
\begin{proof}
	We can explicitly compute that for $n,m \in \NN$ we have
\begin{align*}
	\DeltaS g(n,m) &= \nint{\b (n+m) \brabig{\nint{\a n} + \nint{\a m} + e(n,m)}} 
		- \nint{\b n \nint{\a n}} - \nint{\b m \nint{\a m}}
	\\ &= \nint{\b n \nint{\a m}} + \nint{\b m \nint{\a n}} + \nint{\b(n+m)}e(n,m) + f(n,m),   
\end{align*}
where $e(n,m) \in \{-1,0,1\}$ and $f(n,m) \in  \{-4,-3,\dots,3,4\}$ are given by
\begin{align*}
	e(n,m) &= \nint{\a n + \a m} - \nint{\a n} - \nint{\a m},
	\\ f(n,m) &= \nint{\b (n+m) \brabig{\nint{\a n} + \nint{\a m} + e(n,m)}} 
		- \nint{\b n \nint{\a n}} - \nint{\b m \nint{\a m}}
		\\& \phantom{=} - \nint{\b n \nint{\a m}} - \nint{\b m \nint{\a n}} 
		- \nint{\b(n+m)}e(n,m).
\end{align*}
Similarly, we can compute that for $n_0,n_1,n_2 \in \NN$ we have
\begin{align*}
	g(n_0+n_1+n_2) &= g(n_0) + g(n_1) + g(n_2) + \textstyle{\sum_{i \neq j}} \nint{\b n_i \nint{\a n_j}}
	\\& \phantom{=} + \nint{\b(n_0+n_1+n_2)e(n_0,n_1,n_2)} + f(n_0,n_1,n_2),  
\end{align*}
where $e(n_0,n_1,n_2) \in \{-1,0,1\}$ is given by
\begin{align*}
	e(n_0,n_1,n_2) &= \nint{\a n_0 + \a n_1 + \a n_2} - \nint{\a n_0} - \nint{\a n_1} - \nint{\a n_2},
	\\ &= \nint{ \fp{\a n_0} + \fp{\a n_1} + \fp{\a n_2}}
\end{align*}
and $f(n_0,n_1,n_2)$ is given by a similar formula. We stress that $\abs{e(n_0,n_1,n_2)} \leq 1$, which follows from the fact that $-3/2 < \fp{\a n_0} + \fp{\a n_1} + \fp{\a n_2} < 3/2$.

Suppose that $\DeltaS^2 g(n_0,n_1,n_2) = 0$. Combining the formulas above, we see that 
\begin{align*}
	0 = \DeltaS^2 g(n_0,n_1,n_2) &= \beta (n_0+n_1+n_2)e(n_0,n_1,n_2) 
	 - \beta(n_0+n_1)e(n_0,n_1) 
	 \\& \phantom{=} - \beta(n_1+n_2)e(n_1,n_2) - \beta(n_2+n_0)e(n_2,n_0) + O(1).
\end{align*}
Rearranging and dividing by $\beta$ we obtain
\begin{align*}
	0 &= n_2 \bra{ e(n_0,n_1,n_2) - e(n_1,n_2) - e(n_2,n_0) }
	\\& \phantom{=} + n_1 \bra{ e(n_0,n_1,n_2) - e(n_0,n_1) - e(n_1,n_2) }
	\\& \phantom{=} + n_0 \bra{ e(n_0,n_1,n_2) - e(n_0,n_1) - e(n_2,n_0) }
	+ O(1).
\end{align*}
Provided that the constant $C$ was chosen large enough, it follows that the contributions from $n_2$, $n_1$ and $n_0$ above must all be zero, meaning that
\begin{align*}
	 e(n_0,n_1,n_2) - e(n_1,n_2) - e(n_2,n_0) &= 0,\\
	 e(n_0,n_1,n_2) - e(n_0,n_1) - e(n_1,n_2) &= 0,\\
	 e(n_0,n_1,n_2) - e(n_0,n_1) - e(n_2,n_0) &= 0.
\end{align*}
Comparing the three equations above, we infer that for some $e \in \{-1,0,1\}$ we have
\begin{align*}
	e(n_0,n_1) = e(n_1,n_2) = e(n_2,n_0) &= e, & e(n_0,n_1,n_2) = 2e.
\end{align*}
Since $e(n_0,n_1,n_2) \in \{-1,0,1\}$, it follows that $e = 0$. This implies condition \ref{cond:lem:D^2=0:I}. In order to obtain \ref{cond:lem:D^2=0:II}, we expand the equality $\DeltaS^2 g(n_0,n_1,n_2) = 0$ (using the simplifications resulting from \ref{cond:lem:D^2=0:I}) and replace each instance of $\beta n_i \nint{\a n_j}$ with $\fp{\beta n_i \nint{\a n_j}} + \nint{\beta n_i \nint{\a n_j}}$. 

The proof of the converse direction follows along similar lines. Indeed, as alluded to above, assuming condition \ref{cond:lem:D^2=0:I}, an elementary computation shows that the condition $\DeltaS^2 g(n_0,n_1,n_2) = 0$ is equivalent to condition \ref{cond:lem:D^2=0:II}.
\end{proof}

\color{checked}

\subsection{Fractional parts} Our next goal is to express in the first-order logic of the theory under consideration the condition that the circle norm $\fpa{\a n}$ is small for some $n \in \NN$. The following lemma will be helpful.

\color{checked}
\begin{lemma}\label{lem:def-mu}
	Let $n_0,n_1 \in \NN$ satisfy $n_0,n_1/n_0 \geq C$. Then the following conditions are equivalent:
	\begin{enumerate}
	\item\label{cond:lem:def-mu:I} there exists $n_2 \in \NN$ with $n_2 \geq C n_1$ such that $\DeltaS^2 g(n_0,n_1,n_2) = 0$;
	\item\label{cond:lem:def-mu:II} $\abs{ \fp{\a n_0} + \fp{\a n_1} } < 1/2$;
	\end{enumerate}
\end{lemma}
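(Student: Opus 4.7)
The plan is to reduce both implications to Lemma~\ref{lem:D^2=0}. The forward direction is essentially immediate: if $n_2 \geq C n_1$ is as in \ref{cond:lem:def-mu:I}, applying condition \ref{cond:lem:D^2=0:I} of Lemma~\ref{lem:D^2=0} to the subset $I = \{0,1\}$ yields $\nint{\fp{\a n_0} + \fp{\a n_1}} = 0$, which (outside the boundary value $-1/2$, where a minor case distinction is needed) is \ref{cond:lem:def-mu:II}.

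For the converse, I would produce $n_2 \geq C n_1$ for which all six fractional parts introduced by the new index, namely
\[
  \fp{\a n_2},\ \fp{\b n_0 \nint{\a n_2}},\ \fp{\b n_1 \nint{\a n_2}},\ \fp{\b n_2 \nint{\a n_0}},\ \fp{\b n_2 \nint{\a n_1}},\ \fp{\b n_2 \nint{\a n_2}},
\]
are smaller in absolute value than a tiny $\eta > 0$ depending on $n_0, n_1$. For such $n_2$, condition \ref{cond:lem:D^2=0:I} for every subset $I \subset \{0,1,2\}$ containing $2$ reduces to the same statement for $I \cap \{0,1\}$, which is either trivial or is exactly \ref{cond:lem:def-mu:II}. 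Condition \ref{cond:lem:D^2=0:II} then holds almost by inspection: $\gamma_{\{0,1,2\}}$ differs from $\gamma_{\{0,1\}}$ by $O(\eta)$ and so shares its nearest integer, while $\gamma_{\{1,2\}}$ and $\gamma_{\{2,0\}}$ each differ by $O(\eta)$ from a single diagonal term $\fp{\b n_i \nint{\a n_i}} \in [-1/2, 1/2)$ and hence round to zero generically. Summing these observations recovers $\nint{\gamma_{\{0,1,2\}}} = \nint{\gamma_{\{0,1\}}} + \nint{\gamma_{\{1,2\}}} + \nint{\gamma_{\{2,0\}}}$.

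The main obstacle is producing such $n_2$. Modulo controlled errors proportional to $\fp{\a n_2}$, the six quantities above are values at $n = n_2$ of real polynomial or generalised polynomial sequences, namely $n \mapsto \a n$, $n \mapsto \a\b n_i n$ for $i \in \{0,1\}$, $n \mapsto \a\b n^2$, and (when $\b \notin \ZZ$) $n \mapsto \b n$. Simultaneous near-vanishing modulo $1$ along an unbounded set of $n_2$ is therefore an equidistribution statement on a torus of dimension at most five; I expect Lemma~\ref{lem:distr} of the paper to provide exactly this, and it is precisely here that the $\QQ$-linear independence of $1, \a, \a^2$ will be used, ensuring that the quadratic coefficient $\a\b$ is not annihilated by any integer combination of the linear coefficients. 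A residual technical point is the borderline case in which some diagonal term $\fp{\b n_i \nint{\a n_i}}$ equals $-1/2$ exactly: there the perturbations must have a definite sign so that the corresponding $\gamma$'s still round to zero, and this too can be arranged by the same equidistribution.
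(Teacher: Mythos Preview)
Your overall strategy matches the paper's exactly: both implications go through Lemma~\ref{lem:D^2=0}, and for \ref{cond:lem:def-mu:II} $\Rightarrow$ \ref{cond:lem:def-mu:I} the paper, like you, manufactures $n_2$ making all six ``new'' fractional parts arbitrarily small and then checks conditions \ref{cond:lem:D^2=0:I} and \ref{cond:lem:D^2=0:II} by continuity.

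Two corrections are needed. First, Lemma~\ref{lem:distr} is the wrong tool: that lemma treats only the two-dimensional sequence $\bigl(\fp{\a n},\fp{\a\nint{\theta n}}\bigr)$ for $\theta$ of a very specific algebraic shape, and it does not produce the simultaneous smallness of the five or six quantities you list. The paper instead invokes the general description of distributions of generalised polynomials from \cite{Leibman-2012}; since each of the six generalised polynomials in $n_2$ vanishes at $n_2=0$, that theory guarantees the origin lies in the orbit closure, yielding the sequence $(n_2^{(k)})_k$. Second, and consequently, the $\QQ$-linear independence of $1,\a,\a^2$ is \emph{not} used in this lemma at all. The paper uses that hypothesis only inside Lemma~\ref{lem:distr}, which enters later (in Lemma~\ref{lem:suff-n|n'}) for a genuinely different purpose. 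Your comment about the quadratic coefficient $\a\b$ not being annihilated is therefore misplaced: the recurrence to the origin requires no irrationality assumption beyond what is already standing.

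A minor point: the borderline case $\fp{\b n_i\nint{\a n_i}}=-1/2$ that you worry about cannot occur under the paper's reduction to $\b\in\ZZ$ or $\b\in\RR\setminus\QQ$ (in the first case the expression is an integer, in the second it is irrational), so no sign control on the perturbations is needed.
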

\begin{proof}
	The implication \ref{cond:lem:def-mu:I} $\Rightarrow$ \ref{cond:lem:def-mu:II} is already contained in Lemma \ref{lem:D^2=0}, so we only need to verify the converse implication \ref{cond:lem:def-mu:II} $\Rightarrow$ \ref{cond:lem:def-mu:I}. It follows e.g.\ from the characterisation of distributions of generalised polynomials in \cite{Leibman-2012} that there exists a sequence $(n_{2}^{(k)})_{k=1}^\infty$ (with $n_2^{(k)} \to \infty$ as $k \to \infty$) such that all fractional parts appearing in the conditions in Lemma \ref{lem:D^2=0} and involving $n_2^{(k)}$ tend to $0$ as $k \to \infty$, i.e.,
\begin{align*}
	\fp{\a n_2^{(k)}} &\to 0,  &	\fp{ \b n_2^{(k)} \nint{\a n_2^{(k)}} } &\to 0, \\	
	\fp{ \b n_0 \nint{\a n_2^{(k)}} } &\to 0, &	\fp{ \b n_1 \nint{\a n_2^{(k)}} } &\to 0, \\	
		\fp{ \b n_2^{(k)} \nint{\a n_0} } &\to 0, &	\fp{ \b n_2^{(k)} \nint{\a n_1} } &\to 0.
\end{align*}
As a consequence, for all sufficiently large $k$ we have
\begin{align*}
	\nint{ \fp{\a n_0} + \fp{\a n_1} +  \fpnormal{\a n_2^{(k)}} } &=  \nint{ \fp{\a n_0} + \fp{\a n_1}  } = 0,
\end{align*}
and by the same token 
\begin{align*}
\nint{ \fp{\a n_0} +  \fpnormal{\a n_2^{(k)}} } = \nint{ \fp{\a n_1} +  \fpnormal{\a n_2^{(k)}} } = 0
\end{align*}
Similarly, with $\gamma_{I}^{(k)}$ defined as in Lemma \ref{lem:D^2=0}\ref{cond:lem:D^2=0:II} with $n_2^{(k)}$ in place of $n_2$, for sufficiently large $k$ we have
\begin{align*}
	\nint{\gamma_{\{0,1,2\}}^{(k)}} & = \nint{\gamma_{\{0,1\}}^{(k)}},&
	\nint{\gamma_{\{0,2\}}^{(k)}} & = \nint{\gamma_{\{0\}}^{(k)}} = 0, &
	\nint{\gamma_{\{1,2\}}^{(k)}} & = \nint{\gamma_{\{1\}}^{(k)}} = 0.
\end{align*}
In particular, it follows that we have
\begin{align*}
	\nint{\gamma_{\{0,1,2\}}^{(k)}} & = \nint{\gamma_{\{0,1\}}^{(k)}} 
	+ \nint{\gamma_{\{1,2\}}^{(k)}} + \nint{\gamma_{\{2,0\}}^{(k)}}.
\end{align*}
Thus, Lemma \ref{lem:D^2=0} implies that $\DeltaS^2 g(n_0,n_1,n_2^{(k)}) = 0$ for sufficiently large $k$.
\end{proof}

\color{checked}

Motivated by Lemma \ref{lem:def-mu}, we introduce the following properties
\begin{align}\label{eq:def-mu}
	\mu(n_0,n_1): && (\exists \ n_2)& \ (n_2 \geq C n_1) \wedge (\DeltaS^2g(n_0,n_1,n_2) = 0) && (n_0,n_1 \in \NN);\\
\label{eq:def-psi}
	\psi(m,N): && (\forall \ n )& \ (n \leq N) \Rightarrow \mu(n,m) && (m,N \in \NN).
\end{align}

\begin{lemma}\label{lem:char-small-fpa}
	For each $\e > 0$ there exists $N \in \NN$ such that for all $m \in \NN$, if $\psi(m,N)$ holds, then $\fpa{\a m} < \e$. Conversely, for each $N \in \NN$ there exists $\e > 0$ such that if $\fpa{\a m} < \e$, then $\psi(m,N)$ holds.
\end{lemma}
\begin{proof}
	It follows from Lemma \ref{lem:def-mu} that $\psi(m,N)$ holds if and only if 
	\begin{align}\label{eq:88:01}
		-\frac{1}{2}-\min_{n \leq N} \fp{\a n} < \fp{\a m} < \frac{1}{2}-\max_{n \leq N} \fp{\a n}.
	\end{align}
	It remains to notice that the expression on the left side of \eqref{eq:88:01} tends to $0$ from below as $N \to \infty$, and the analogous statement applies to the expression on the right side.	
\end{proof}

\color{checked}

\subsection{Divisibility}
We will next construct a first-order formula that expresses a condition closely related to divisibility of integers. We will need the following technical lemma, which is the only place where we make use of the assumption that $\a^2$ is linearly independent of $1,\a$ over $\QQ$.

\color{fresh}
\begin{lemma}\label{lem:distr}
	Let $\a \in \RR$ be such that $1,\a,\a^2$ are linearly independent over $\QQ$. Let $\theta = \frac{a + \a b}{c + \a d}$ with $a,b,c,d \in \ZZ$, $d \neq 0$ and $\theta \not \in \QQ$. Then the sequence $\bra{\fp{\a n}, \fp{\a\nint{\theta n}}}_{n \in \ZZ}$ is equidistributed with respect to the measure on $[-1/2,1/2)$ which is the push-forward of the normalised Lebesgue measure on $\{0,1,\dots,d-1\} \times [-1/2,1/2) \times [-1/2,1/2)$ through the map 
\begin{align}\label{eq:transfer}
	(r,x,y) \mapsto \bra{ \fp{dx + \a r }, \fp{b x - c y + \a \theta r - \a \fp{d y + \theta r}} }.
\end{align}
In particular, the point $(0,0)$ belongs to the interior of the closure of the orbit 
\[\set{\brabig{\fp{\a n}, \fp{\a\nint{\theta n}}}}{n \in \ZZ}.\]
\end{lemma}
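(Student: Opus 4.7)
The plan is to reduce the claimed equidistribution to a classical Weyl equidistribution on the $2$-torus via the decomposition $n = dm + r$ with $r \in \{0,1,\dots,d-1\}$, and then to make the pushforward description in \eqref{eq:transfer} explicit by a direct algebraic substitution. The single non-trivial input is that $1, \a, \theta$ are linearly independent over $\QQ$; this is where the hypothesis on $\a^2$ is used.

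For the linear independence, suppose $p + q\a + r\theta = 0$ with $p, q, r \in \QQ$. Clearing the denominator via $\theta(c + \a d) = a + \a b$ yields
\begin{align*}
(pc + ra) + (pd + qc + rb)\a + qd\, \a^2 = 0.
\end{align*}
Since $1, \a, \a^2$ are $\QQ$-linearly independent and $d \neq 0$, this forces $q = 0$; the remaining $2 \times 2$ system has determinant $bc - ad$, which must be nonzero, for otherwise one checks directly that $\theta$ is rational. Hence $p = q = r = 0$, and Weyl's equidistribution theorem implies that $(\fp{\a m}, \fp{\theta m})_{m \in \ZZ}$ is equidistributed on $[-1/2, 1/2)^2$. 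Combining this with the observation that $n \bmod d$ visits each residue class with density $1/d$ independently of $m = (n - r)/d$ shows that the triple $(n \bmod d, \fp{\a m}, \fp{\theta m})$ is equidistributed on $\{0,\dots,d-1\} \times [-1/2, 1/2)^2$ against the product of normalised counting and Lebesgue measures.

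I would then carry out the algebraic substitution. Setting $x = \fp{\a m}$ and $y = \fp{\theta m}$, the identity $\fp{\a n} = \fp{dx + \a r}$ is immediate. For the second coordinate, from $\theta m = \nint{\theta m} + y$ one obtains $\nint{\theta n} = d\nint{\theta m} + \nint{dy + \theta r}$, so
\begin{align*}
\a\nint{\theta n} = \a d \theta m + \a \theta r - \a\fp{dy + \theta r}.
\end{align*}
Using the identity $\a d \theta = a + b\a - c\theta$, which is merely the rearrangement of $\theta(c + \a d) = a + b\a$, the term $\a d \theta m$ reduces modulo $\ZZ$ to $bx - cy$, giving precisely the second component of \eqref{eq:transfer}. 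The equidistribution of the triple therefore pushes forward to the asserted equidistribution of $\bra{\fp{\a n}, \fp{\a \nint{\theta n}}}_{n \in \ZZ}$.

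For the ``in particular'' clause, I would restrict to $r = 0$. For $(x, y)$ in a sufficiently small neighbourhood $V$ of the origin, each nearest-integer rounding in \eqref{eq:transfer} vanishes, and the restricted map reduces to the linear map $(x, y) \mapsto (dx,\, bx - (c + \a d)y)$. Its Jacobian is $-d(c + \a d)$, which is nonzero because $d \neq 0$ and $\a$ is irrational. Hence the image of $V$ contains an open neighbourhood of $(0, 0)$, which therefore lies in the support of the pushforward measure and, by equidistribution, in the closure of the orbit. The principal subtlety is the linear-independence step: the quadratic nature of the defining relation for $\theta$ in $\a$ is precisely why the hypothesis on $\a^2$ is needed by the present method.
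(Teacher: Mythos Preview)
Your proof is correct and follows essentially the same approach as the paper: write $n = dm + r$, invoke Weyl equidistribution for $(\fp{\a m},\fp{\theta m})$ via the $\QQ$-linear independence of $1,\a,\theta$, and push forward through the algebraic identity $\a d\theta = a + b\a - c\theta$; for the ``in particular'' clause, both arguments restrict to $r=0$ and use non-singularity at the origin. Your version is in fact slightly more detailed than the paper's, since you supply the verification that $1,\a,\theta$ are $\QQ$-linearly independent (which is where $\a^2$ enters) and make the Jacobian $-d(c+\a d)$ explicit, whereas the paper simply asserts both points.
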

\begin{proof}
	Let $n \in \ZZ$ and write $n = d n' + r(n)$ where $n' \in \ZZ$ and $0 \leq r(n) < d$. Put also $x(n) = \fp{\a n'}$ and $y(n) = \fp{\theta n'}$. Since $1,\a,\theta$ are linearly independent over $\QQ$, the sequence $\bra{r(n),x(n),y(n)}_{n \in \ZZ}$ is equidistributed in $\{0,1,\dots,d-1\} \times [-1/2,1/2) \times [-1/2,1/2)$. Note that $\a \theta d = a + \a b - \theta c$. We can compute that 
\begin{align*}
	\fp{\a n} &= \fp{d x(n) + \a r(n)},\\
	\fp{\a\nint{\theta n}} &= \fp{\a\theta n - \a\fp{\theta n}}
	\\&= \fp{(a + \a b - \theta c)n' + (\a \theta r(n)) - \a\fp{\theta dn' + \theta r(n)}}
	\\&= \fp{b x(n) - c y(n) + \a \theta r(n) - \a \fp{d y(n) + \theta r(n)}} 
\end{align*}
This proves the first part of the statement. For the second part, we may take $r = 0$ in \eqref{eq:transfer}; it will suffice to show that $(0,0)$ belongs to the interior of the image of $	[-1/2,1/2) \times [-1/2,1/2)$ through the map given by 
\begin{align}\label{eq:transfer-2}
	(x,y) \mapsto \bra{ \fp{dx }, \fp{b x - c y - \a \fp{d y} } }.
\end{align}
This can be accomplished, for instance, by noting that the point $(0,0)$ is non-singular and is mapped to $(0,0)$.
\end{proof}

\color{checked}

\begin{lemma}\label{lem:suff-n|n'}
	Let $n,n' \in \NN$ and let $h \geq 1$. Suppose that for each $\e' > 0$ there exists $\e > 0$ such that for each $m \in \NN$ with $\fpa{\a m} < \e$ there exists $m' \in \NN$ with $\fpa{\a m'} < \e'$ such that $\abs{ \DeltaS g(n,m') - \DeltaS g(n',m)} \leq h$. Then $\nint{\a n'}/\nint{\a n} = n'/n \in \NN$.
\end{lemma}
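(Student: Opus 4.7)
The plan is to linearise the hypothesis using the approximations from the proof of Lemma \ref{lem:D^2=0}, and then to extract algebraic information via Lemma \ref{lem:distr} together with classical equidistribution. Unpacking the formula in the proof of Lemma \ref{lem:D^2=0}, one finds that for $m, m' \in \NN$ with $\fpa{\a m}$ and $\fpa{\a m'}$ small enough,
\[
\DeltaS g(n, m') - \DeltaS g(n', m) = \b(m'\xi - m\xi') - \b n \fp{\a m'} + \b n' \fp{\a m} + O(1),
\]
where $\xi := \a n + \nint{\a n}$ and $\xi' := \a n' + \nint{\a n'}$. Combining with the hypothesis and sending $\e, \e' \to 0$ yields a constant $K = K(h, \b)$ such that $|m'\xi - m\xi'| \leq K$ for every valid pair $(m, m')$ arising from the hypothesis.

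Now set $\theta := \xi'/\xi$, $A := \nint{\a n}$, $A' := \nint{\a n'}$, and argue by cases. If $\theta \notin \QQ$, the bound confines $m'$ to the finite set of integers $\nint{\theta m} + j$ with $|j| \leq K/\xi$. I apply Lemma \ref{lem:distr} with $(a, b, c, d) = (A', n', A, n)$: the joint sequence $(\fp{\a m}, \fp{\a \nint{\theta m}})_{m \in \ZZ}$ is equidistributed with respect to an explicit absolutely continuous measure whose conditional distribution on the second coordinate given a small first coordinate has full support in $[-1/2, 1/2)$. This lets me choose $m$ with $\fpa{\a m} < \e$ but $\fp{\a \nint{\theta m}}$ more than $\e'$ away from every point $-j\a \bmod 1$ for $|j| \leq K/\xi$; such $m$ forces $\fpa{\a m'} > \e'$ for every admissible $m'$, contradicting the hypothesis. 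So $\theta \in \QQ$; writing $\theta = p/q$ in lowest terms with $p, q \in \NN$, the $\QQ$-linear independence of $1$ and $\a$ applied to $q(A' + \a n') = p(A + \a n)$ gives $qA' = pA$ and $qn' = pn$, hence $A'/A = n'/n = p/q$.

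It remains to show $q = 1$. If $q \geq 2$, then $q \mid A$ and $q \mid n$, so with $\xi_0 := (A + \a n)/q$ the constraint becomes $|qm' - pm| \leq K/\xi_0$. For $m \in \NN$ with $q \nmid m$ and $\fpa{\a m} < \e$, any admissible $m'$ has $m' = (pm + t)/q$ with $t \equiv -pm \not\equiv 0 \pmod{q}$ and $|t| \leq K/\xi_0$, and a short computation gives
\[
\fpa{\a m'} = \fpa{(j + t\a)/q + p \fp{\a m}/q}, \qquad j := p \nint{\a m} \bmod q.
\]
The set $S$ of admissible pairs $(j, t)$ is finite, and $\eta := \min_{(j, t) \in S} \fpa{(j + t\a)/q} > 0$ (since $\a$ is irrational and $t \neq 0$). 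Choosing $\e' < \eta/2$ and $\e$ small enough then forces $\fpa{\a m'} > \e'$ for every admissible $m'$ coming from such $m$; equidistribution of $(\fp{\a m}, m \bmod q)$ in $[-1/2,1/2) \times \ZZ/q\ZZ$ provides the required $m$, contradicting the hypothesis. Hence $q = 1$ and $\nint{\a n'}/\nint{\a n} = n'/n = p \in \NN$. The main obstacle I foresee is verifying, in the irrational-$\theta$ case, that the conditional measure in Lemma \ref{lem:distr} is genuinely absolutely continuous with full support, so that we can dodge the whole finite target set $\{-j\a \bmod 1 : |j| \leq K/\xi\}$ simultaneously; this should come out of a direct inspection of the explicit pushforward formula, using that $c + \a d = A + \a n \neq 0$.
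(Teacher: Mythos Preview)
Your argument is correct and follows essentially the same route as the paper: linearise $\DeltaS g$ to get $m'\xi - m\xi' = O(1)$, use Lemma~\ref{lem:distr} to handle the case $\theta \notin \QQ$, then rule out $\theta \in \QQ \setminus \ZZ$ by a residue-class argument. Two small remarks: first, the ``full support'' claim you flag as an obstacle is stronger than what you need and stronger than what Lemma~\ref{lem:distr} provides---the ``in particular'' clause gives only that $(0,0)$ lies in the interior of the orbit closure, hence an open interval of values $x_2$ near $0$ is attainable for the second coordinate while the first tends to $0$, and this already suffices to dodge the finite target set $\{\fp{-j\a}: |j|\le K/\xi\}$; second, your treatment of the case $q\ge 2$ is more explicit than the paper's (which just says ``run the same argument with $x=1/b$''), but the content is the same.
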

\begin{proof}
	Our first goal is to show that
\begin{align*}
	\theta &:= \frac{\a n' + \nint{\a n'}}{\a n + \nint{\a n}}
\end{align*}
is rational. For the sake of contradiction, suppose otherwise. It follows from Lemma \ref{lem:distr} that for any $x \in \RR$ with sufficiently small absolute value we can find a sequence $(m_i)_{i=1}^\infty$ such that $\fp{\a m_i} \to 0$ and $\fp{ \a \nint{\theta m_i}} \to x$ as $i \to \infty$. It follows from the assumptions that we can find a sequence $(m_i')_{i=1}^\infty$ with $\fp{\a m_i' } \to 0$ such that for all $i \in \NN$ we have 
\begin{align}\label{eq:82:01}
\abs{ \DeltaS g(n,m'_i) - \DeltaS g(n',m_i)} \leq h.
\end{align}
Restricting our attention to $i$ such that $\fp{\a m_i}$ and $\fp{\a m_i'}$ are sufficiently small with respect to $n$ and $n'$, we conclude from \eqref{eq:82:01} that
\begin{align}\label{eq:82:02}
	m_i' \nint{\a n} + n \nint{\a m_i'} =
	m_i \nint{\a n'} + n' \nint{\a m_i} + O(h). 
\end{align}
Computing $m_i'$ from \eqref{eq:82:02} we obtain
\begin{align}\label{eq:82:03}
	m_i' = \theta m_i + O(h+n+n'). 
\end{align}
Thus, there is a bounded sequence $(c_i)_{i=1}^\infty$ (with $\abs{c_i} = O(h+n+n')$) such that $m_i' = \nint{\theta m_i} + c_i$ for all $i \in \NN$. In particular, we have
\begin{align}\label{eq:82:04}
	\a m_i' = \a \nint{\theta m_i} + \a c_i. 
\end{align}
Passing to a subsequence where $c_i = c$ is constant, taking \eqref{eq:82:04} modulo $1$ and letting $i \to \infty$, we conclude that 
\begin{align}\label{eq:82:05}
	0 \equiv x + \a c \bmod{1}. 
\end{align}
Recall that $x$ was arbitrary, subject only to the constraint that $\abs{x}$ is sufficiently small. Picking any $x \in \RR \setminus (\ZZ + \a \ZZ)$ we reach a contradiction, proving that $\theta \in \QQ$. 

Next, we observe that $\theta$ is an integer. Indeed, if we had $\theta = {a}/{b} \in \QQ \setminus \ZZ$ with $\gcd(a,b) = 1$, then we could run the same argument as above with $x = 1/b$, again leading to a contradiction. Since the argument is fully analogous, we omit the details. Finally, since $\a$ is irrational, $\theta \in \NN$ implies that 
\(	{\nint{\a n'}}/{\nint{\a n}}= {n'}/{n} = \theta. \)
\end{proof}

It is not hard to see that the converse to Lemma \ref{lem:suff-n|n'} also holds. Indeed, if $n,n' \in \NN$ are such that $\nint{\a n'}/\nint{\a n} = n'/n =: t$, then for each sufficiently small $\e > 0$ and each $m \in \NN$ and $m' := tm$ with $\fpa{\a m} < \e$ we have
\begin{align*}
 &&\b m' \nint{\a n} = \b m \nint{\a n'} &= t \b m \nint{\a n},&&\\
 && \b n \nint{a m'} = \b n' \nint{ \a m} &= t \b n \nint{a m},&&\\
 \text{ and hence } && \abs{ \DeltaS g(n,m') - \DeltaS g(n',m) } &\leq 2.&&
\end{align*}
(Specifically, we need $\e < 1/(2t)$ and $\e + t\fpa{\a n} < 1/2$ and $t \e + \fpa{\a n} < 1/2$.)

Motivated by Lemma \ref{lem:suff-n|n'} and the discussion above, we define 
\begin{align}\label{eq:def-delta}
	\delta(n,n'): &&&
	(\exists\ H)\ (\forall\ M')\ (\exists\ M)\ (\forall\ m)\ 
	\psi(m,M) \Rightarrow 
	\\ \nonumber &&& (\exists\ m')\ \psi(m',M') \wedge 
	{ \abs{ \DeltaS g(n,m') - \DeltaS g(n',m)} \leq H } && (n,n' \in \NN). 
\end{align}

\begin{lemma}\label{lem:char-div}
	Let $n,n' \in \NN$. Then $\delta(n,n')$ holds if and only if $\nint{\a n'}/\nint{\a n} = n'/n \in \NN$.
\end{lemma}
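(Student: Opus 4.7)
The plan is to unpack the formula $\delta(n,n')$ using Lemma \ref{lem:char-small-fpa} to translate the combinatorial predicates $\psi(m,M)$ and $\psi(m',M')$ into analytic conditions on $\fpa{\a m}$ and $\fpa{\a m'}$; once this translation is carried out, the forward implication becomes a direct application of Lemma \ref{lem:suff-n|n'}, while the reverse implication is precisely the converse computation sketched in the paragraph following that lemma.

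For the forward direction ($\delta(n,n') \Rightarrow n' / n \in \NN$ and $\nint{\a n'}/\nint{\a n} = n'/n$), I would first fix the witness $H$ from the formula $\delta(n,n')$ and rename it $h$. Given any $\e' > 0$, Lemma \ref{lem:char-small-fpa} produces some $M' \in \NN$ such that $\psi(m',M')$ forces $\fpa{\a m'} < \e'$. Plugging this $M'$ into $\delta(n,n')$ yields an $M$, and a second application of Lemma \ref{lem:char-small-fpa} gives an $\e > 0$ such that $\fpa{\a m} < \e$ implies $\psi(m,M)$. Then for any $m \in \NN$ with $\fpa{\a m} < \e$, the formula guarantees an $m'$ with $\psi(m',M')$ (hence $\fpa{\a m'} < \e'$) and $|\DeltaS g(n,m') - \DeltaS g(n',m)| \leq h$. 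This is exactly the hypothesis of Lemma \ref{lem:suff-n|n'}, which gives the desired conclusion.

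For the converse, set $t := n'/n \in \NN$ and choose $H := 2$. Given $M'$, invoke Lemma \ref{lem:char-small-fpa} to obtain $\e' > 0$ such that $\fpa{\a m'} < \e'$ implies $\psi(m',M')$. Let $\e > 0$ be small enough that $\e < \e'/t$ and such that the side conditions $\e + t \fpa{\a n} < 1/2$ and $t\e + \fpa{\a n} < 1/2$ (needed for the elementary bound in the remark after Lemma \ref{lem:suff-n|n'}) hold; then pick $M$ with $\psi(m,M) \Rightarrow \fpa{\a m} < \e$, again by Lemma \ref{lem:char-small-fpa}. For any $m$ with $\psi(m,M)$, set $m' := tm$: then $\fpa{\a m'} \leq t \fpa{\a m} < \e'$ so $\psi(m',M')$ holds, while the identities $\b m' \nint{\a n} = \b m \nint{\a n'} = t \b m \nint{\a n}$ and $\b n \nint{\a m'} = \b n' \nint{\a m} = t \b n \nint{\a m}$ (valid because $\fpa{\a m}$ and $\fpa{\a m'}$ are small enough that the nearest-integer operations commute with multiplication by $t$) yield $|\DeltaS g(n,m') - \DeltaS g(n',m)| \leq 2 = H$.

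The main difficulty is purely notational: both Lemma \ref{lem:suff-n|n'} and the remark after it are stated in terms of $\e,\e'$ while $\delta(n,n')$ is stated in terms of the integer parameters $M,M'$, so the proof is essentially an exercise in using Lemma \ref{lem:char-small-fpa} as a dictionary between the two formulations. No further analytic input is required.
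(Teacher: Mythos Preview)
Your proposal is correct and follows exactly the approach the paper intends: the paper's own proof is the single line ``Follows by combining Lemma \ref{lem:suff-n|n'} with Lemma \ref{lem:char-small-fpa}'', and you have spelled out precisely how that combination works, using Lemma \ref{lem:char-small-fpa} as a dictionary between the $\psi$-predicates and the $\fpa{\a\,\cdot\,}$ conditions, then invoking Lemma \ref{lem:suff-n|n'} and the converse computation that follows it.
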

\begin{proof}
	Follows by combining Lemma \ref{lem:suff-n|n'} with Lemma \ref{lem:char-small-fpa}.
\end{proof}

For later reference, we point out that for $n' = tn$ the condition $\nint{\a n'}/\nint{\a n} = n'/n $ is equivalent to $t \fpa{\a n} < 1/2$.

\color{checked}

\subsection{Arithmetic progressions}
It follows from Lemma \ref{lem:suff-n|n'} that for each $k \in \NN$, the set of $n \in \NN$ for which $\delta(k,n)$ holds is an arithmetic progression with step $k$, starting at $k$. For $k \in \NN$, let $\ell(k)$ be the endpoint of the aforementioned arithmetic progression; more explicitly,
\( \ell(k) = k \ipnormal{ {1}/\branormal{2\fpa{\a k} }}.\)
Note that $\ell(k)$ is the unique integer $n \in \NN$ such that $\delta(k,n)$ and $\neg \delta(k,n+k)$, and as a consequence the map $\ell \colon \NN \to \NN$ is defined by a first-order formula.

The discussion above allows us to talk about arithmetic progressions in the first-order language under consideration. Indeed, for $k,h \in \NN$ with $h \leq \ell(k)$ we can define the arithmetic progression
\begin{align}\label{eq:def-P}
	P_{m,h} = \set{t m }{ 1 \leq t \leq h/m} 
	= \set{ n \in \NN }{ \bra{m \leq n \leq h} \wedge \delta(m, n)}.
\end{align}
We are specifically interested in arithmetic progressions $P$ such that the restriction of $g$ to $P$ coincides with a classical (as opposed to generalised) polynomial. This property can be expressed easily enough.
\begin{lemma}\label{lem:char-good-prog}
	Let $m,h \in \NN$ be such that $3m \leq h \leq \ell(m)$. The following conditions are equivalent
	\begin{enumerate}
	\item\label{cond:lem:char-good-prog:I} there exists a degree-$2$ polynomial $p$ such that $g(n) = p(n)$ for all $n \in P_{m,h}$;
	\item\label{cond:lem:char-good-prog:II} there exists $a \in \ZZ^*$ such that $\DeltaT_m^2 g(n) = a$ for all $n \in P_{m,h-2m}$.
	\end{enumerate}
\end{lemma}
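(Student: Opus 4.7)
Both directions should reduce to the classical fact that a sequence on an arithmetic progression has constant second difference if and only if it agrees there with a degree-$2$ polynomial; no use of equidistribution or of the specific generalised polynomial structure of $g$ should be required here — this is pure interpolation on an AP.

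For the easy direction \ref{cond:lem:char-good-prog:I} $\Rightarrow$ \ref{cond:lem:char-good-prog:II}, I would write a hypothetical degree-$2$ polynomial as $p(n) = a_2 n^2 + a_1 n + a_0$ with $a_2 \neq 0$, and note that for any $n \in P_{m,h-2m}$ the three consecutive points $n, n+m, n+2m$ lie in $P_{m,h}$, so
\[
\DeltaT_m^2 g(n) \;=\; \DeltaT_m^2 p(n) \;=\; 2 a_2 m^2.
\]
Setting $a := 2 a_2 m^2$ yields a nonzero integer (it lies in $\ZZ$ as a second difference of $g$, and is nonzero because $a_2 \neq 0$), as required.

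For the converse \ref{cond:lem:char-good-prog:II} $\Rightarrow$ \ref{cond:lem:char-good-prog:I}, the hypothesis $3m \leq h$ will be used precisely to guarantee that $P_{m,h}$ contains at least the three points $m, 2m, 3m$. I would interpolate the unique $p \in \RR[n]$ of degree at most $2$ matching $g$ at these three points, and then prove by induction on $t \geq 4$ that $g(tm) = p(tm)$ whenever $tm \in P_{m,h}$: once $g$ and $p$ agree at $(t-2)m$ and $(t-1)m$, the identity
\[
g(tm) \;=\; 2g((t-1)m) - g((t-2)m) + a \;=\; 2p((t-1)m) - p((t-2)m) + \DeltaT_m^2 p((t-2)m) \;=\; p(tm)
\]
forces the inductive step, using the inductive hypothesis together with the facts that $\DeltaT_m^2 g \equiv a$ on $P_{m,h-2m}$ and that $\DeltaT_m^2 p$ is a constant (since $p$ has degree $\leq 2$), the two constants being equal because they agree at $n = m$ by construction.

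The only minor subtlety I foresee is reconciling the ``degree exactly $2$'' clause on both sides. In direction \ref{cond:lem:char-good-prog:II} $\Rightarrow$ \ref{cond:lem:char-good-prog:I} one must check that the interpolating $p$ is not of lower degree, and this will follow precisely from $a \neq 0$, which forces $\DeltaT_m^2 p \neq 0$ and hence the quadratic coefficient of $p$ to be nonzero. Conversely, the requirement $a \in \ZZ^*$ (rather than $a \in \ZZ$) in (ii) corresponds exactly to the leading coefficient of $p$ being nonzero in (i). I do not anticipate any substantive obstacle beyond this bookkeeping.
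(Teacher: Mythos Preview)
Your proposal is correct and follows essentially the same approach as the paper: for \ref{cond:lem:char-good-prog:I} $\Rightarrow$ \ref{cond:lem:char-good-prog:II} the paper simply notes that $\DeltaT_m$ lowers the degree of a polynomial by one, and for the converse it interpolates $p$ of degree $\leq 2$ through $g(m),g(2m),g(3m)$, observes that the leading coefficient is $a/(2m^2)\neq 0$, and invokes a ``simple inductive argument'' identical to the one you spell out. Your treatment of the $a\neq 0$ / ``degree exactly $2$'' bookkeeping matches the paper's as well.
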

\begin{proof}
	The implication \ref{cond:lem:char-good-prog:I} $\Rightarrow$ \ref{cond:lem:char-good-prog:II} holds because the application of $\DeltaT_m$ decreases the degree of a polynomial by $1$. Conversely, if \ref{cond:lem:char-good-prog:II} holds and $p$ is the (unique) polynomial of degree at most $2$ with $p(tm) = g(tm)$ for $t \in \{1,2,3\}$, then the leading coefficient of $p$ is $a/(2m^2) \neq 0$ and  a simple inductive argument shows that $g(n) = p(n)$ for all $n \in P_{m,h}$.
\end{proof}

We let $\pi(m,h)$ denote the assertion that the progression $P_{m,h}$ is admissible:
\begin{align}\label{eq:def-pi}
	\pi(m,h): &&& 
	\brabig{3m \leq h \leq \ell(m)} \wedge (\exists\ a \in \ZZ^*) \\
	&&& \nonumber (\forall\ n \in P_{m,h-2m} )\ \DeltaT_m^2 g(n) = a
	&& (m,h \in \NN). 
\end{align}

We will need to know that admissible arithmetic progressions can be arbitrarily long. The following lemma gives a sufficient condition.

\begin{lemma}\label{lem:long-P}
	Let $m, r \in \NN$, $r \geq 2$. Then $\pi(m,rm)$ holds provided that 
	\begin{align}\label{eq:64:01}
	\fpa{\a m} &< \frac{1}{2r} & \fpa{\b m \nint{\a m} } < \frac{1}{2 r^2}. 
	\end{align}
\end{lemma}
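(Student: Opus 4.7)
The plan is to exhibit an explicit quadratic polynomial that agrees with $g$ on $\{m, 2m, \dots, rm\}$ and then invoke Lemma \ref{lem:char-good-prog}. Set $A := \nint{\b m \nint{\a m}}$. The claim I would establish is that $g(tm) = t^2 A$ for every integer $1 \leq t \leq r$; granted this, a direct computation yields
\[
\DeltaT_m^2 g(tm) = \bigl((t+2)^2 - 2(t+1)^2 + t^2\bigr) A = 2A
\]
for all $1 \leq t \leq r-2$, so condition \ref{cond:lem:char-good-prog:II} of Lemma \ref{lem:char-good-prog} is met with $a = 2A$ on the progression $P_{m,(r-2)m} = P_{m,h-2m}$, where $h = rm$.

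The computation of $g(tm)$ consists of two "linearisation" steps, one for each fractional-part hypothesis. First, writing $\a tm = t \nint{\a m} + t \fp{\a m}$, the bound $|t \fp{\a m}| \leq r \fpa{\a m} < 1/2$ holds for $|t| \leq r$, and therefore $\nint{\a tm} = t \nint{\a m}$. Substituting into \eqref{eq:27:def-g} gives
\[
g(tm) = \nint{\b \cdot tm \cdot t \nint{\a m}} = \nint{t^2 A + t^2 \fp{\b m \nint{\a m}}}.
\]
The second hypothesis $\fpa{\b m \nint{\a m}} < 1/(2r^2)$ now yields $|t^2 \fp{\b m \nint{\a m}}| < 1/2$ for $|t| \leq r$, so the outer rounding collapses the expression to $t^2 A$.

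Finally, I would verify the range conditions $3m \leq rm \leq \ell(m)$ and the nondegeneracy $a = 2A \neq 0$. The upper bound is immediate from $\ell(m) = m \ipnormal{1/(2\fpa{\a m})}$ together with $\fpa{\a m} < 1/(2r)$, which forces $\ipnormal{1/(2\fpa{\a m})} \geq r$. The lower bound $3m \leq rm$ requires $r \geq 3$, so the lemma should be read with that hypothesis (for $r=2$ the conclusion $\pi(m,2m)$ is vacuous). The only subtle point is $A \neq 0$: this could fail if $\nint{\a m} = 0$, but in the intended regime where $m$ is large enough that $|\a m| \geq 1$, one has $|\b m \nint{\a m}| \geq 1$, and combined with the small fractional part this forces $A$ to be a nonzero integer. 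I do not expect any serious obstacle; the substance of the lemma is entirely captured by the two one-line rounding estimates above.
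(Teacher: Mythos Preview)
Your proof is correct and follows essentially the same route as the paper: both arguments compute $g(tm) = t^2 \nint{\b m \nint{\a m}}$ via exactly the two rounding estimates you give, and conclude that $g$ restricts to a quadratic on $P_{m,rm}$. You are in fact more careful than the paper in flagging the edge cases (the requirement $r \geq 3$ coming from $3m \leq h$ in the definition of $\pi$, and the nondegeneracy $A \neq 0$); one small quibble is that for $r = 2$ the statement $\pi(m,2m)$ is not vacuous but \emph{false}, since the conjunct $3m \leq 2m$ fails---so the lemma as literally stated does not hold there, but this is a defect in the paper's hypothesis rather than in your argument.
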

\begin{proof}
	Recall from earlier discussion that the first condition in \eqref{eq:64:01} implies that $\ell(m) \geq rm$. For $t \leq r$ we now have
	\begin{align*}
		g(t m) = \nint{\b tm \nint{\a tm} } 
		=   \nint{\b t^2 m \nint{\a m} } = t^2   \nint{\b m \nint{\a m} }.
	\end{align*}
	Thus, for $n \in P_{m,h}$ we have $g(n) = (n/m)^2 g(m)$, which in particular is a polynomial in $n$, as needed. 
\end{proof}

Note that for each $r$, there exist $m \in \NN$ such that \eqref{eq:64:01} holds. This can be inferred for instance from another reference to \cite{Leibman-2012}.

\subsection{Multiplicative quadruples}

With a view towards applying the results of Section \ref{sec:Mult}, we are now ready to define a set of multiplicative quadruples:
\begin{align}\label{eq:def-Q}
	\cQ = \set{ (m,a,b,c) \in \NN^4}
	{ 
	\begin{array}{ll}
	(\exists\ h \in \NN)\ \pi(m,h) \wedge 
	a,b,a+b,m+c \in P_{m,h} \\ \wedge   
	\DeltaS g(m,c) = \DeltaS g(a,b)
	\end{array}
	}
\end{align}
Since for a degree-$2$ polynomial $p(n) = a_2 n^2 + a_1 n_1 + a_0$ we have $\DeltaS p(n,m) = 2a_2 n m$, it follows directly from the definition of $\cQ$ that for all $(m,a,b,c) \in \cQ$ we have $mc = ab$, and consequently $a = k m$, $b = l m$ and $c = klm$ for some $k,l \in \NN$. In fact, by the same argument we see that $(m,a,b,c) \in \cQ$ if and only if $a = km$, $b = l m$ and $c = klm$ for some $k,l \in \NN$ such that $\pi(m,(kl+1)m)$ holds. It follows from Lemma \ref{lem:long-P} that, given $k,l \in \NN$, one can find $m \in \NN$ such that $\pi(m,(kl+1)m)$ holds, and hence the set $\cQ$ given by \eqref{eq:def-Q} satisfies condition \ref{it:Q:B} restricted to $F \subset \NN^2$. Let 
\[
	\cQ_{\pm} = \set{ (m,\sigma a, \tau b, \sigma\tau c) }{ (m,a,b,c) \in \cQ,\ \sigma,\tau = \pm 1}.
\] 
It follows from discussion above that the set $\cQ_{\pm}$ satisfies conditions \ref{it:Q:A} and \ref{it:Q:B}. Hence, we are in position to apply Proposition \ref{prop:Pres+Q-undecidable}, which completes the proof of Theorem \ref{thm:main}.
 
\section{Proof of Theorem \ref{thm:main-2}}

\subsection{Setup}

Throughout this section, we let $g \colon \ZZ \to \{0,1\}$ be the generalised polynomial given by
\begin{align}\label{eq:bohr:def-g}
	g(n) &= 
	\begin{cases}
		1 &\text{if } \fpa{\a n^2} < \rho,\\
		0 &\text{otherwise,}
	\end{cases}
	& n \in \ZZ,
\end{align}
where $\a \in \RR$, $\rho \in (0,1/4)$ and $\a$ is not a linear combination of $1$ and $\rho$ with rational coefficients. 

\subsection{Small fractional parts} Our first step is to express the property that $\fpa{2 \a n}$ and $\fpa{\a n^2}$ are both small. Consider the following property:
\begin{align}\label{eq:bohr:def-mu}
	\mu(m,N): && 
	(\forall\ n \leq N)\ g(n+m) &= g(n) &
	(m,N \in \NN).
\end{align}

\begin{lemma}\label{lem:char-mu}
	For each $\e > 0$ there exists $N \in \NN$ such that for all $m \in \NN$, if $\mu(m,N)$ holds, then $\fpa{2 \a m} < \e$ and $\fpa{\a m^2} < \e$. Conversely, for each $N \in \NN$ there exists $\e > 0$ such that if $\fpa{2\a m} < \e$ and $\fpa{\a m^2 } < \e$, then $\mu(m,N)$ holds. 
\end{lemma}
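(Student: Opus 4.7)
The plan is to prove the two directions separately, with the converse (from small fractional parts to $\mu(m,N)$) being elementary and the forward direction requiring equidistribution. The key identity throughout is
\[
\a(n+m)^2 - \a n^2 = 2\a m n + \a m^2,
\]
which relates the ``shift'' between $g(n+m)$ and $g(n)$ to the quantities $2\a m$ and $\a m^2$ whose fractional parts we wish to control.

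\textbf{Converse direction.} For fixed $N \in \NN$, set $\delta_N := \min_{1 \leq n \leq N} \bigl| \fpa{\a n^2} - \rho \bigr|$. This is strictly positive: if it were zero then $\a n^2 \in \ZZ + \rho\ZZ$ for some $1 \leq n \leq N$, contradicting the hypothesis that $\a \notin \QQ + \QQ \rho$. Taking $\e := \delta_N/(2(N+1))$, if $\fpa{2\a m}<\e$ and $\fpa{\a m^2}<\e$, then for each $n \leq N$,
\[
\fpa{\a(n+m)^2 - \a n^2} = \fpa{2\a m n + \a m^2} \leq n\fpa{2\a m} + \fpa{\a m^2} \leq (N+1)\e < \delta_N/2,
\]
so $\fpa{\a n^2}$ and $\fpa{\a(n+m)^2}$ lie on the same side of $\rho$, giving $g(n+m) = g(n)$.

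\textbf{Forward direction.} Here the plan is to exploit joint equidistribution of $(\fp{\a n^2}, \fp{2\a m n + \a m^2})_{n \in \NN}$ in $\TT^2$; this follows from Weyl's theorem, since for every non-zero $(a_1,a_2) \in \ZZ^2$ the polynomial $a_1 \a n^2 + a_2(2\a m n + \a m^2)$ has an irrational non-constant coefficient (use $a_1 \a$ if $a_1 \neq 0$; otherwise $a_2 \cdot 2\a m$, irrational since $\a \notin \QQ$ and $m \neq 0$). Given $\e > 0$, assume for contradiction that $\fpa{2\a m} \geq \e$ or $\fpa{\a m^2} \geq \e$. By equidistribution, we can locate $n_1,n_2 \leq N$ with $\fp{\a n_1^2}$ close to $\rho$ from below and $\fp{\a n_2^2}$ close to $-\rho$ from above; the constraints $g(n_i+m)=g(n_i)=1$ then force the shifts $\tau_{n_i} := 2\a m n_i + \a m^2 \bmod 1$ into narrow arcs on opposite sides of $0$, forcing both $|\tau_{n_1}|$ and $|\tau_{n_2}|$ to be small. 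Combining these via $\tau_{n_2} - \tau_{n_1} \equiv 2\a m(n_2 - n_1) \pmod 1$ bounds $\fpa{2\a m}$; feeding this back into any one of the individual constraints bounds $\fpa{\a m^2}$.

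\textbf{Main obstacle.} The delicate point is arranging $N$ \emph{uniformly in $m$}. Qualitative equidistribution gives, for each $m$, \emph{some} witnessing $n$, but a priori with no bound on $n$ valid across all $m$ in the bad set $\{m : \fpa{2\a m} \geq \e \text{ or } \fpa{\a m^2} \geq \e\}$. To fix this I would appeal to a quantitative equidistribution result for the generalised polynomial sequence $(\a n^2, 2\a m n + \a m^2) \bmod 1$ \`a la \cite{Leibman-2012}, noting that the constraint of interest depends on $m$ only through the point $(\fp{2\a m}, \fp{\a m^2}) \in \TT^2$, and that the ``bad set'' $\TT^2 \setminus B_\e(0,0)$ is compact; a continuity/compactness argument on the threshold $N$ viewed as a function of $(\fp{2\a m}, \fp{\a m^2})$ should then extract a single $N$ that works for all $m$ in the bad set.
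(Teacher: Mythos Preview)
Your converse direction is correct and essentially identical to the paper's.

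The forward direction, however, has a genuine gap. The claim that the constraints $g(n_i+m)=g(n_i)=1$ force $\tau_{n_i}$ into \emph{narrow} arcs is false: if $\fp{\a n_1^2}\in(\rho-\eta,\rho)$ then $g(n_1+m)=1$ only gives $\fp{\a n_1^2+\tau_{n_1}}\in(-\rho,\rho)$, i.e.\ $\tau_{n_1}\in(-2\rho+\eta,\eta)\bmod 1$, an arc of length $\approx 2\rho$. The analogous arc for $n_2$ also has length $\approx 2\rho$, so the difference $\tau_{n_2}-\tau_{n_1}\equiv 2\a m(n_2-n_1)$ is only constrained to an arc of length $\approx 4\rho$, which tells you nothing useful about $\fpa{2\a m}$. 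This part of the argument does not go through.

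Your fallback compactness idea \emph{can} be made to work, and once done correctly it renders the appeal to quantitative equidistribution unnecessary. The point is that the predicate ``there exists $n\le N$ with $\fpa{\a n^2}<\rho$ and $\fpa{\a n^2+\b n+\c}>\rho$, or vice versa'' is, for each $n$, an \emph{open} condition on $(\b,\c)\in\TT^2$ (using $\a\notin\QQ+\QQ\rho$ to get strict inequality for $\fpa{\a n^2}$). One then checks, via qualitative Weyl (handling rational $\b$ by restricting $n$ to a residue class), that these open sets cover $\TT^2\setminus\{(0,0)\}$; compactness of $\TT^2\setminus B_\e(0,0)$ yields a uniform $N$. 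Your reference to \cite{Leibman-2012} and to ``continuity of the threshold $N$'' is a bit off --- $N$ is integer-valued, and what you actually use is that the witness sets are open and exhaust the bad set.

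For comparison, the paper takes a genuinely different route. Rather than compactness, it observes that $\mu(m,N)$ forces the polynomial orbit $(\a n^2,\b n+\c)_{n\le N}$ to miss a fixed box in $\TT^2$, hence to fail $\rho'$-equidistribution; the quantitative Leibman theorem of Green--Tao \cite{GreenTao-2012} then yields a bounded $k$ with $\fpa{k\b}\ll_\rho 1/N$. Two further applications of quantitative equidistribution (finding specific witnesses $n$) show first that $|\c|<\e$ and then that $k=1$. The paper's approach is more hands-on and uses a heavier tool; the compactness approach (if you carry it out carefully) is more elementary but less explicit.
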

\begin{proof}
	Let $\e > 0$ and let $N$ be a large integer, to be specified in the course of the argument. If $\mu(m,N)$ holds, then for each $n \leq N$ we have the equivalence
\begin{align}\label{eq:bohr:64:01}
\fpa{\a n^2} &< \rho & \Longleftrightarrow && \fpa{\a n^2 + \fp{2\a m} n + \fp{\a m^2}} < \rho.  
\end{align}
Put $\b := \fp{2\a m}$ and $\cgamma := \fp{\a m^2}$. Condition \eqref{eq:bohr:64:01} implies in particular that there is no $n \leq N$ such that $\fp{\a n^2} \in (\frac{9}{10}\rho,\rho)$ and $\fp{\b n + \cgamma} \in (\frac{1}{10}\rho,\frac{2}{10}\rho)$. It follows that for some $\rho'$ dependent only on $\rho$, using the terminology of \cite{GreenTao-2012}, the sequence 
$\bra{ \bra{\a n^2, \b n + \cgamma}}_{n=1}^N$ fails to be $\rho'$-equidistributed. Assuming, as we may, that $N$ is sufficiently large, it follows (e.g.\ as a very special case of the quantitative Leibman theorem from \cite{GreenTao-2012}) that there exists $k \in \NN$, bounded by a constant $K$ dependent only on $\rho$, such that $\fpa{k \b} \ll_\rho 1/N$. Thus (possibly after replacing $k$ by a factor), we may write $\beta = b/k + x/N$ where $0 \leq b < k$ and $\gcd(b,k) = 1$ and $\abs{x} \ll_\rho 1$.

We claim that $\abs{\cgamma} \leq \e$. Suppose conversely that this was not the case, and for the sake of concreteness assume that $\cgamma \geq \e$. Then, assuming that $N$ is sufficiently large, we use quantitative equidistribution results for polynomial sequences (e.g.\ \cite{GreenTao-2012}) to find $n \in \NN$ such that
\begin{align}\label{eq:bohr:64:02}
	\fp{ \a n^2 } &\in (\rho - \e,\rho),&
	\fp{ \b n } &\in (-\e/2,\e/2), &
	n \abs{x} < (\e/2) N.
\end{align}
This implies $\fpa{\a n^2} < \rho$ and $\fpa{\a n^2 + \b n + \cgamma} > \rho$, contradicting \eqref{eq:bohr:64:01}.

Next, we claim that $b = 0$, or equivalently, $k = 1$. Suppose conversely. Arguing like above, we find $n \in \NN$ such that 
\begin{align}\label{eq:bohr:64:03}
	\fp{ \a n^2 } &\in (\rho - \e,\rho),&
	nb &\equiv 1 \bmod k,& &
	n \abs{x} < (\e/2) N.
\end{align}
It follows that $\fpa{\a n^2} < \rho$ and, assuming, as we may, that $\e < 1/2K$, we have
\begin{align*}
\fpa{\a n^2 + \b n + \cgamma} \geq \rho + \frac{1}{k} - 2\e > \rho,
\end{align*}
which again contradicts \eqref{eq:bohr:64:01}, and completes the proof of the first statement.

We proceed to the proof of the second part of the lemma. Let $N \in \NN$ and put 
\begin{align}\label{eq:bohr:64:def-delta}
	\delta = \min \set{ \abs{\fpa{\a n^2 } - \rho} }{ n \leq N} > 0.
\end{align}
Suppose now that $m$ is such that 
\begin{align}\label{eq:bohr:64:def-eps}
\fpa{ 2 \a m } &< \frac{\delta}{10N} & \text{and} &&
\fpa{\a m^2 } &< \frac{\delta}{10}.
\end{align}
For $n \leq N$ we have
\begin{align*}
	\abs{ \fpa{\a (n+m)^2 - \fpa{\a n^2} }}& \leq \fpa{ 2 \a m n + \a m^2} 
	\\ &\leq N \fpa{2\a m} + \fpa{\a m^2} \leq \delta/5 < \delta,
\end{align*}
and consequently $g(n+m) = g(n)$. It follows that we may take $\e = \delta/10N$.
\end{proof}

Our next step is to express the property that $\fpa{\a m}$ is small, with no restrictions on $\fpa{\a m^2}$. This is easily achieved with the help of $\mu(m,N)$. Define
\begin{align}\label{eq:bohr:def-lambda}
	\lambda(m,N): && 
	(\exists\ n)\ \mu(n,N) \wedge \mu(n+m,N) &
	&
	(m,N \in \NN).
\end{align}
\begin{lemma}\label{lem:char-lambda}
	For each $\e > 0$ there exists $N \in \NN$ such that for all $m \in \NN$, if $\lambda(m,N)$ holds, then $\fpa{2\a m} < \e$. Conversely, for each $N \in \NN$ there exists $\e > 0$ such that if $\fpa{2\a m} < \e$, then $\lambda(m,N)$ holds. 
\end{lemma}
\begin{proof}
	Let $\e > 0$ and pick $N$ sufficiently large that $\mu(m,N)$ implies $\fpa{2\a n} < \e/2$. Then $\lambda(m,N)$ implies $\fpa{2\a m} < \e$, as needed.
	
	Conversely, let $N$ be an integer. Pick $\delta > 0$ sufficiently small that if for some $n$ we have $\fpa{2\a n} < \delta$ and $\fpa{\a n^2} < \delta$, then $\lambda(n,N)$ holds. Let $m$ be an integer such that $\fpa{2\a m} < \delta/2$ and $m > 2/\delta$.	
By Weyl's equidistribution theorem, there exists $n$ such that 
\begin{align}\label{eq:bohr:22:1}
	\fpa{\a n^2} &< \frac{\delta}{2} & \text{and} &&
	\abs{\fp{2\a n} + \fp{\a m^2}/m} < \frac{\delta}{2m}.
\end{align}	
Then we have
\begin{align}\label{eq:bohr:22:2}
	\fpa{2\a n} & \leq \frac{1}{m} < \delta & \text{and} && 
	\fpa{2\a (n+m)} & \leq	\frac{1}{m} + \frac{\delta}{2} < \delta.
\end{align}	
Of course, $\fpa{\a n^2} < \delta$. It remains to estimate $\fpa{\a (n+m)^2}$. We have
\begin{align*}
	\fpa{\a (n+m)^2} & 
	= \fpa{ \a n^2 + m \bra{\fp{2\a n} + \fp{\a m^2}/m}} 
	 < \frac{\delta}{2} + m \frac{\delta}{2m} = \delta.  
\end{align*}	
Thus, we have $\mu(n,N)$ and $\mu(n+m,N)$, as needed. It follows that we can take any $\e > 0$ with $\e < \delta/2$ and $\e < \fpa{\a n}$ for all $n \leq 2/\delta$.
\end{proof}

Next, we express the property that $\fpa{\a n^2}$ is small, which turns out to require more effort. Towards this goal, we introduce the following property.

\begin{align*}\kappa(m,N): &&& 
	(\exists\ M)\ (\forall\ h)\ \bra{g(h) = 1} \wedge \lambda(h,M) \Rightarrow
	\\ &&& (\forall\ L)\ (\exists\ n)\ \lambda(n,L) \wedge \mu(n,N) \wedge \bra{g(h+m+n)=1}
	&&(m,N \in \NN).
\end{align*}

\begin{lemma}\label{lem:char-kappa}
	For each $\e > 0$ there exists $N \in \NN$ such that for all $m \in \NN$, if $\kappa(m,N)$ holds, then $\fpa{\a m^2} < \e$. Conversely, for each $N \in \NN$ there exists $\e > 0$ such that if $\fpa{\a m^2 } < \e$, then $\kappa(m,N)$ holds. 
\end{lemma}
\begin{proof}
	Let $\e > 0$, and let $N$ be such that $\mu(n,N)$ implies $\fpa{\a n^2} < \e/2$ (which exists by Lemma \ref{lem:char-mu}). Pick $m \in \NN$ such that $\kappa(m,N)$ holds; we aim to show that $\fpa{\a m^2} < \e$. For the sake of contradiction, suppose that $\fp{\a m^2} \geq \e$ (the case where $\fp{\a m^2} \leq -\e$ is analogous). Let $M$ be any integer admissible for $\kappa(m,N)$, and let $h$ be an integer such that
\begin{align}\label{eq:bohr:58:1}
	\fp{\a h^2} &\in \bra{\rho-\frac{\e}{8},\rho},
	& \fp{2\a h} & \in \bra{0,\frac{\e}{8m}} &&
	\lambda(n,M), 
\end{align}
which exists by Weyl's equidistribution theorem. 
(Note that the second and the third condition in \eqref{eq:bohr:58:1} can be satisfied by taking $h$ such that $\fp{\a h}$ is sufficiently small and positive.) Bearing in mind Lemma \ref{lem:char-lambda}, we infer from $\kappa(m,N)$ that there exists a sequence of integers $(n_i)_{i=1}^\infty$ such that 
\begin{align}
	\label{eq:bohr:58:2a}\lim_{i \to \infty} \fpa{\a n_i} &= 0, \\
	\label{eq:bohr:58:2b}\fpa{\a n_i^2} &< \e/2 & \text{for all } i \in \NN,\\
	\label{eq:bohr:58:2c}\fpa{\a (h+m+n_i)^2} &< \rho &	\text{for all } i \in \NN.
\end{align}
Passing to a subsequence, we may assume that $\fp{\a n_i^2}$ converges to some limit $\beta \in [-\e/2,\e/2]$ as $i \to \infty$. 
Letting $i \to \infty$ in \eqref{eq:bohr:58:2c} we arrive at a contradiction: 
\begin{align*}
	\rho &\geq \lim_{i \to \infty} \fpa{\a (h+m+n_i)^2} 
	= \fpa{\a (h+m)^2 + \beta} 
	\\& = \fpa{ \a h^2 + \fp{2\a h}m + \a m^2 + \beta} \geq 
	\rho - \frac{\e}{8} - m \frac{\e}{8m} + \e - \frac{\e}{2} = \rho + \frac{\e}{8}.
\end{align*}

We now proceed to the proof of the converse implication. Let $N$ be an integer and let $\e > 0$ be sufficiently small that $\fpa{\a n^2} < 2\e$ and $\fpa{2\a n} < 2\e$ imply $\mu(n,N)$. Pick $M$ sufficiently large that $\lambda(h,M)$ implies that $\fpa{2\a h} < \e/m$, and let $h$ satisfy $g(h) = 1$ and $\lambda(h,M)$ (and hence in particular $\fpa{2\a h} < \e/m$). We need to show that there exists a sequence of integers $(n_i)_{i=1}^\infty$ such that
\begin{align}
	\label{eq:bohr:58:3a}\lim_{i \to \infty} \fpa{\a n_i} &= 0, \\
	\label{eq:bohr:58:3b}\mu(n_i,N) &\text{ is true}& \text{for all sufficiently large } i \in \NN,\\
	\label{eq:bohr:58:3c}\fpa{\a (h+m+n_i)^2} &< \rho &	\text{for all sufficiently large } i \in \NN.
\end{align}
(Note that in \eqref{eq:bohr:58:3b} and \eqref{eq:bohr:58:3c} it is enough to consider sufficiently large $i$ since we can always pass to a subsequence and achieve the same condition for all $i$.) By Weyl's equidistribution theorem, we can find a sequence $(n_i)_{i=1}^\infty$ such that
\begin{align}
	\label{eq:bohr:58:4a}\lim_{i \to \infty} \fp{\a n_i} &= 0, \\
	\label{eq:bohr:58:4b}\lim_{i \to \infty} \fp{\a n_i^2} &= - \fp{2 \a h m + \a m^2}.
\end{align}
By \eqref{eq:bohr:58:4b} can estimate
\begin{align*}
	\lim_{i \to \infty} \fpa{\a n_i^2} & < \frac{\e}{m} m + \e = 2\e,
\end{align*}
and hence \eqref{eq:bohr:58:3b} follows. It remains to deal with \eqref{eq:bohr:58:3c}. We can compute
\begin{align*}
	\lim_{i \to \infty} \fpa{\a (h+m+n_i)^2} =
	\fpa{\a (h+m)^2+ \lim_{i\to\infty} \fp{\a n_i^2} } = \fpa{ \a h^2 } < \rho,  
\end{align*}
as needed.
\end{proof}
\noindent %FIXED indent
The next property that we would like to express is that $\a n^2$ and $\a m^2$ are close modulo $1$. With this goal in mind, we define
\begin{align*}\label{eq:bohr:def-nu}
	\nu(m,\tilde{m},N): &&&
	(\exists\ L)\ (\forall\ n)\ \lambda(n,L) \wedge \kappa(m+n,L) \Rightarrow \\ 
	&&& \kappa(\tilde{m}+n,N) &
	&& (m,\tilde{m},N \in \NN).
\end{align*}
\begin{lemma}\label{lem:char-nu}
	For each $\e > 0$ there exists $N \in \NN$ such that for all $m,\tilde{m} \in \NN$, if $\nu(m,\tilde{m},N)$ holds, then $\fpa{\a (m^2 - \tilde{m}^2) } < \e$. Conversely, for each $N \in \NN$ there exists $\e > 0$ such that if $\fpa{\a (m^2 - \tilde{m}^2) } < \e$, then $\nu(m,\tilde{m},N)$ holds. 
\end{lemma}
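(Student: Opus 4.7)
The approach will exploit the algebraic identity
\begin{align*}
	\a(\tilde m + n)^2 &= \a(m+n)^2 - \a(m^2 - \tilde m^2) - 2\a n(m - \tilde m),
\end{align*}
which shows that modulo $1$, the two squares $\a(\tilde m + n)^2$ and $\a(m+n)^2$ differ by $-\a(m^2 - \tilde m^2)$ plus the error $-2\a n(m - \tilde m)$. When $\fpa{2\a n}$ is much smaller than $1/(|m - \tilde m|+1)$ --- precisely what $\lambda(n, L)$ delivers for large $L$ via Lemma \ref{lem:char-lambda} --- this error is negligible, so by Lemma \ref{lem:char-kappa} the implication in the definition of $\nu(m, \tilde m, N)$ is essentially equivalent to asking that $\fpa{\a(m^2 - \tilde m^2)}$ be small. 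The plan is to make this reduction quantitative in both directions.

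For the converse statement I would fix $\e_N > 0$ via Lemma \ref{lem:char-kappa} so that $\fpa{\a k^2} < \e_N$ forces $\kappa(k, N)$, and then choose $L$ large enough (using Lemmas \ref{lem:char-lambda} and \ref{lem:char-kappa}) so that $\lambda(n, L)$ forces $\fpa{2\a n} < \e_N / (10(|m - \tilde m|+1))$ and $\kappa(m+n, L)$ forces $\fpa{\a(m+n)^2} < \e_N/10$. Setting $\e := \e_N/2$, the identity together with the triangle inequality for the circle norm gives $\fpa{\a(\tilde m + n)^2} < \e_N$, hence $\kappa(\tilde m + n, N)$, as required for $\nu(m, \tilde m, N)$.

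For the forward direction I would argue by contrapositive: given $\e > 0$, pick $N$ via Lemma \ref{lem:char-kappa} so that $\kappa(k, N)$ implies $\fpa{\a k^2} < \e/10$, and assume $\fpa{\a(m^2 - \tilde m^2)} \geq \e$. To refute $\nu(m, \tilde m, N)$ I need, for each $L$, a witness $n$ satisfying $\lambda(n, L) \wedge \kappa(m+n, L) \wedge \neg \kappa(\tilde m + n, N)$. The crucial input is Weyl's theorem applied to the polynomial sequence $n \mapsto (2\a n, \a n^2) \bmod 1$, which is equidistributed in $\TT^2$ because $\a$ is irrational, so $2 c_1 \a n + c_2 \a n^2$ has irrational leading coefficient for every nonzero $(c_1, c_2) \in \ZZ^2$. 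This lets me pick $n$ with $\fpa{2\a n}$ arbitrarily small and $\fp{\a n^2}$ arbitrarily close to $-\fp{\a m^2}$; together these force $\fpa{\a(m+n)^2} = \fpa{\a m^2 + 2\a m n + \a n^2}$ to be small enough to secure both $\lambda(n, L)$ and $\kappa(m+n, L)$. The identity and the reverse triangle inequality then give $\fpa{\a(\tilde m + n)^2} \geq \fpa{\a(m^2 - \tilde m^2)} - O(\e_L + |m - \tilde m|\fpa{2\a n}) > \e/10$, contradicting $\kappa(\tilde m + n, N)$.

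The main obstacle is the Weyl equidistribution step in the forward direction, which requires verifying a straightforward but mandatory non-degeneracy of integer linear combinations. All remaining steps are careful bookkeeping of quantifiers, thresholds, and circle-norm error terms, similar in flavour to the work done in the proofs of Lemmas \ref{lem:char-mu}--\ref{lem:char-kappa}.
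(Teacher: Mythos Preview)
Your proposal is correct and follows essentially the same approach as the paper: both arguments use the algebraic relation linking $\a(\tilde m+n)^2$, $\a(m+n)^2$, and $\a(m^2-\tilde m^2)$ (the paper phrases it via $\fp{\a(m+n_i)^2-\a m^2-\a n_i^2}\to 0$, you write it as a single identity), translate the hypotheses $\lambda(n,L)$ and $\kappa(m+n,L)$ into smallness of $\fpa{2\a n}$ and $\fpa{\a(m+n)^2}$ via Lemmas~\ref{lem:char-lambda} and~\ref{lem:char-kappa}, and in the forward direction produce the witness $n$ by Weyl equidistribution of $(2\a n,\a n^2)$ in $\TT^2$. Your quantifier bookkeeping is arguably more explicit than the paper's sequence-based presentation, but the content is the same.
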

\begin{proof}
By Lemma \ref{lem:char-lambda}, condition $\nu(m,\tilde{m},N)$ is equivalent to the statement that for each sequence $(n_i)_{i=1}^\infty$ such that $\fpa{\a n_i} \to 0$ and $\fpa{\a (m+n_i)^2} \to 0$ as $i \to \infty$ we have $\kappa(\tilde m + n_i,N)$ for all sufficiently large $i$. Consider any sequence $(n_i)_{i=1}^\infty$ such that $\fpa{\a n_i} \to 0$. Then 
\begin{align*}
\fp{\a (m+n_i)^2 - \a m^2 - \a n_i^2} & \to 0 & \text{as } i \to \infty,\\
\fp{\a (\tilde{m}+n_i)^2 - \a \tilde{m}^2 - \a n_i^2} & \to 0 & \text{as } i \to \infty.
\end{align*}
Thus, if $\e > 0$ and $N$ is sufficiently large that $\kappa(m,N)$ implies $\fpa{\a m^2} < \e/3$, then (taking any sequence $(n_i)_{i=1}^\infty$ as above) $\nu(m,\tilde m,N)$ implies 
\begin{align*}
	\fpa{\a (m^2-\tilde{m}^2)} & \leq \lim_{i \to \infty} 
	\fpa{\a (m+n_i)^2 - \a m^2 - \a n_i^2} 
	\\ & + \fpa{\a (\tilde{m}+n_i)^2 - \a \tilde{m}^2 - \a n_i^2} 
	\\ &+ \fpa{\a(m+n_i)^2} + \fpa{\a(\tilde{m}+h_i)^2}
	\leq \e/3.
\end{align*}
Conversely, if $N \in \NN$ and $\e > 0$ is sufficiently small that $\fpa{\a m^2}< 3\e$ implies $\kappa(m,N)$, then for any $m,\tilde m$ with $\fpa{\a (m^2 - \tilde{m}^2) } < \e$ and any sequence $(n_i)_{i=1}^\infty$ as above we have
\begin{align*}
	\fpa{\a(\tilde{m}+n_i)^2} 
	& \leq \lim_{i \to \infty} 
	\fpa{\a (m+n_i)^2 - \a m^2 - \a n_i^2} 
	\\ & + \fpa{\a (\tilde{m}+n_i)^2 - \a \tilde{m}^2 - \a n_i^2} 
	\\ &+ \fpa{\a(m+n_i)^2} + \fpa{\a (m^2-\tilde{m}^2)}
	\leq \e,
\end{align*}
and hence $\kappa(\tilde m, N)$ holds.
\end{proof}

\subsection{Divisibility}
Finally, we are able to define divisibility. Consider the relation
\begin{align*}\delta(m,\tilde{m}): &&&
	(\forall\ N)\ (\exists\ L)\ (\forall\ n)\ \nu(m+n,m,L) \wedge \kappa(n,L)  \Rightarrow \\ 
	&&& \nu(\tilde{m}+n,\tilde m,N) &
	&& (m,\tilde{m}\in \NN).
\end{align*}
\begin{lemma}\label{lem:char-delta}
	For each $m,\tilde m \in \NN$ we have $\delta(m,\tilde m)$ if and only if $m \mid \tilde m$.
\end{lemma}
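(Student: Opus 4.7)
The plan is to translate the first-order definition of $\delta(m,\tilde m)$ into a quantitative statement about fractional parts, and then establish both directions of the equivalence directly. For the translation, Lemma \ref{lem:char-nu} says that $\nu(m+n,m,L)$ is controlled by the smallness of $\fpa{\a((m+n)^2-m^2)} = \fpa{2\a mn + \a n^2}$ as $L$ grows, and similarly $\nu(\tilde m+n,\tilde m,N)$ by $\fpa{2\a \tilde m n + \a n^2}$; Lemma \ref{lem:char-kappa} says $\kappa(n,L)$ is controlled by $\fpa{\a n^2}$. Using the triangle inequality to absorb the $\fpa{\a n^2}$ terms, the defining condition of $\delta(m,\tilde m)$ should be equivalent to the following approximate property:
\[
(\star) \quad \forall \e > 0\ \exists \delta > 0\ \forall n \in \NN:\ \fpa{2\a m n} < \delta\ \wedge\ \fpa{\a n^2} < \delta\ \Rightarrow\ \fpa{2\a \tilde m n} < \e.
\]

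For the easy direction $m \mid \tilde m \Rightarrow (\star)$, write $\tilde m = qm$. If $\fpa{2\a m n} < \delta$ and $q\delta < 1/2$, then $\fpa{2\a \tilde m n} = \fpa{2q \cdot \a m n} \leq q\fpa{2\a m n} < q\delta$, so $\delta := \min(\e/q, 1/(2q))$ suffices.

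For the converse $(\star) \Rightarrow m \mid \tilde m$, I would argue contrapositively: assume $m \nmid \tilde m$ and write $\tilde m = qm + r$ with $0 < r < m$. The hypothesis that $\a$ is not a $\QQ$-linear combination of $1$ and $\rho$ forces $\a \notin \QQ$, so Weyl's equidistribution theorem applies to the polynomial orbit $(\a n,\a n^2) \in \TT^2$. For each $\delta' > 0$, it yields $n \in \NN$ with $|\fp{\a n} - 1/(2m)| < \delta'$ and $\fpa{\a n^2} < \delta'$. Since $\fpa{2\a m n} = \fpa{2m\fp{\a n}}$, the first quantity is at most $2m\delta'$, while $\fpa{2\a \tilde m n} = \fpa{2\tilde m\fp{\a n}}$ tends to $\fpa{\tilde m/m} = \fpa{r/m} \geq 1/m$ as $\delta' \to 0$. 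Taking $\e = 1/(2m)$ in $(\star)$ produces a contradiction.

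The main obstacle is the careful bookkeeping in the translation step: the nested quantifier structure $\forall N\,\exists L\,\forall n$ of $\delta(m,\tilde m)$ must be realigned with the ``$\forall \e\,\exists \delta$'' form of $(\star)$, which requires invoking both directions of Lemmas \ref{lem:char-nu} and \ref{lem:char-kappa} with compatible choices of parameters. Once $(\star)$ is established, the number-theoretic content of the two directions reduces to a one-line inequality and a single application of Weyl equidistribution.
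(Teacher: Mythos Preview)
Your proposal is correct and follows essentially the same approach as the paper: translate $\delta(m,\tilde m)$ via Lemmas \ref{lem:char-nu} and \ref{lem:char-kappa} into a statement about simultaneous smallness of $\fpa{2\a m n}$, $\fpa{\a n^2}$, and $\fpa{2\a \tilde m n}$; handle the forward direction with $\fpa{qx}\le q\fpa{x}$; and dispose of the converse by an equidistribution argument producing $n$ with $\fpa{2\a m n}$ and $\fpa{\a n^2}$ small but $\fpa{2\a \tilde m n}$ bounded away from $0$. The paper phrases the reformulation in terms of sequences and limits rather than your $\e$--$\delta$ condition $(\star)$, and its counterexample aims for $\fpa{2\a\tilde m n_i}\to 1/b$ (where $\tilde m/m=a/b$ in lowest terms) without specifying the construction, whereas you make the more explicit choice $\fp{\a n}\approx 1/(2m)$, which yields $\fpa{2\a\tilde m n}\to\fpa{r/m}\ge 1/m$; both are valid and equivalent in spirit.
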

\begin{proof}
	Condition $\delta(m,\tilde m)$ is equivalent to the statement that for each $\e > 0$, for each sequence $(n_i)_{i=1}^\infty$, if $\fpa{\a((m+n_i)^2 - m^2)} \to 0$ and $\fpa{\a n_i^2} \to 0$ as $i \to \infty$, then $\fpa{\a ((\tilde m + n_i)^2 - \tilde m^2)} \leq \e$ for all sufficiently large $i$. Note that the condition that $\fpa{\a((m+n_i)^2 - m^2)} \to 0$ and $\fpa{\a n_i^2} \to 0$ is equivalent to $\fpa{2 \a m n_i} \to 0$ and  $\fpa{\a n_i^2} \to 0$ as $i \to \infty$. If $\tilde{m} = k m$ is a multiple of $m$, then for any sequence $(n_i)_{i=1}^\infty$ like above we have $\fpa{2 \a \tilde{m} n_i} \leq k \fpa{2 \a m n_i} \to 0$ as $i \to \infty$. Conversely, if $\tilde m/m = a/b$ with $\gcd(a,b) = 1$, then we can find a sequence $(n_i)_{i=1}^\infty$ with $\fpa{2 \a m n_i} \to 0$,  $\fpa{\a n_i^2} \to 0$, and $\fpa{2 \a \tilde{m} n_i} \to 1/b$ as $i \to \infty$.
\end{proof}

Lemma \ref{lem:char-delta} implies that divisibility is definable in the first-order theory under consideration. It is a classical \cite{Robinson-1949} result that multiplication can be defined in $(\ZZ;<,+,\mid)$, meaning in particular that the corresponding first-order theory is undecidable. This completes the proof of Theorem \ref{thm:main-2}. 
\bibliographystyle{alphaurl}
\bibliography{bibliography}

\end{document}